\documentclass[11pt,reqno]{amsart}

\usepackage{amsmath,amssymb,amsfonts,mathtools}
\usepackage{tikz}
\usetikzlibrary{positioning}
\usepackage{booktabs}
\usepackage{siunitx}
\newcommand{\numeoc}[1]{\num[round-precision=2,round-mode=places, scientific-notation=false, retain-zero-exponent=false]{#1}}
\newcommand{\nummesh}[1]{\num[round-precision=0,round-mode=places, scientific-notation=false, retain-zero-exponent=false]{#1}}

\theoremstyle{plain}
\newtheorem{theorem}{Theorem}[section]
\newtheorem{lemma}[theorem]{Lemma}

\newtheorem{proposition}[theorem]{Proposition}

\newtheorem{problem}[theorem]{Problem}
\theoremstyle{remark}
\newtheorem{remark}[theorem]{Remark}

\renewcommand{\div}{\operatorname{div}}
\newcommand{\curl}{\operatorname{curl}}

\newcommand{\dx}{\,dx}
\newcommand{\ds}{\,ds}
\newcommand{\id}{\operatorname{id}}

\newcommand{\Lag}[1]{\mathcal{L}_{#1}}
\newcommand{\Cle}[1]{\mathcal{Q}_{#1}}
\newcommand{\NedI}[1]{\mathcal{N}_{#1}^{\mathrm{I}}}
\newcommand{\NedII}[1]{\mathcal{N}_{#1}^{\mathrm{II}}}

\newcommand{\std}{\mathrm{std}}
\newcommand{\pr}{\mathrm{pr}}

\newcommand{\eremk}{\hbox{}\hfill\rule{0.8ex}{0.8ex}}
\newcommand{\dist}{\operatorname{dist}}

\usepackage{hyperref}
\usepackage{cleveref}

\newcommand{\commutingdiagram}{
	\begin{tikzpicture}[>=latex, node distance=3.5em and 4em]
		\node (tl) {$H^1(\Omega)$};
		\node (tr) [right=of tl] {$H(\curl,\Omega)$};
		\node (bl) [below=of tl] {$\Lag{k}(\mathcal{T}_h)$};
		\node (br) [below=of tr] {$\NedI{k-1}(\mathcal{T}_h)$};
		\draw[->] (tl) -- (tr) node[midway,above] {$\nabla$};
		\draw[->] (tl) -- (bl) node[midway,left] {$\Lag{k}$};
		\draw[->] (tr) -- (br) node[midway,right] {$\NedI{k-1}$};
		\draw[->] (bl) -- (br) node[midway,above] {$\nabla$};
	\end{tikzpicture}
}

\title[Pressure-robustness by commuting interpolation]{Pressure-robustness by commuting interpolation operators for Stokes discretizations with continuous pressures}

\author[P.~L.~Lederer]{Philip L. Lederer}
\address{Department of Mathematics, University of Hamburg, Bundesstra{\ss}e 55, D-20146 Hamburg, Germany}
\email{philip.lederer@uni-hamburg.de}

\author[T.~Vock]{Theresa Vock}
\address{Institute of Analysis and Scientific Computing, TU Wien, Wiedner Hauptstrasse 8--10, 1040 Wien, Austria}

\subjclass{65N30, 65N12, 76D07}
\keywords{pressure-robustness, Stokes equations, mixed finite elements, commuting diagrams, N\'ed\'elec interpolation, stabilized finite elements}

\begin{document}

\begin{abstract}
Common finite element discretizations for the incompressible Stokes equations with a
continuous pressure approximation -- the MINI element, the Taylor--Hood element, or
stabilized equal-order elements -- are not pressure-robust: the velocity error is polluted
by the pressure best-approximation error, multiplied by the inverse viscosity. The
established remedy replaces the test function in the momentum equation by a
divergence-preserving reconstruction operator; for continuous pressure approximations this
construction is cumbersome, requiring the solution of local problems (Lederer, Linke,
Merdon \& Sch\"oberl, SIAM J. Numer. Anal., 2017). We propose a simpler alternative:
interpolating the force by a N\'ed\'elec interpolation operator of suitable order instead
of reconstructing the test function. Exploiting the commuting diagram property linking
N\'ed\'elec and Lagrange interpolation operators, we show that the resulting method is
pressure-robust for any $H^1$-conforming, continuous-pressure discretization. We give the
complete analysis for the MINI element, and then for stabilized equal-order $P^kP^k$
elements of arbitrary degree $k$, proving that the interpolation-induced consistency error
is pressure-robust and converges of optimal order. We validate the theory by several numerical examples.
\end{abstract}

\maketitle

\section{Introduction}
\label{sec:introduction}
Let $\Omega \subseteq \mathbb{R}^2$ be an open, simply connected, bounded set with smooth
boundary.  For a given force $f \in [L^2(\Omega)]^2$ and constant dynamic viscosity $\nu>0$, we consider the stationary Stokes equations
\begin{subequations}
	\label{eq:Stokes}
	\begin{alignat}{2}
	- \nu \, \Delta u  + \nabla p  &= f, &\quad &\text{in } \Omega, \label{eq:Stokes_line1} \\
	\div(u) &= 0, &\quad &\text{in } \Omega, \label{eq:Stokes_line2}
\end{alignat}
\end{subequations}
with homogeneous Dirichlet boundary conditions $u=0$ on $\partial\Omega$.  Here $u$ denotes the velocity and $p$ the pressure. Note that other boundary conditions are possible but we restrict ourselves to homogeneous Dirichlet boundary conditions for simplicity. Further, due to the already technical complexity of the analysis, we restrict ourselves to the two dimensional setting, but the results can be extended to three dimensions in a straightforward manner. To show the applicability we also present a numerical example in three dimensions.

Most classical finite element discretizations of \eqref{eq:Stokes}, see e.g. \cite{braess2013}, that are stable, convergent and easy to implement have the drawback that they are not pressure-robust.
This means that the velocity error depends on the approximation of the pressure, scaled by the inverse of the viscosity. In the case of a small viscosity this can lead to significant approximation errors and non-physical behavior of the discrete solution. This phenomenon originates in the relaxation of the divergence constraint in the discrete setting and has been well studied, see e.g. the overview article \cite{VolkerLinke2017}.

In recent years, pressure-robustness has become a major point of focus in the numerical
analysis of the incompressible (Navier--)Stokes equations, see e.g.\ \cite{BrenneckeLinkeMerdon2015,MR4615427,MR4235807,MR4161756,MR4250631}. Note that the literature is to big to be cited completely here, and we refer to the references in \cite{VolkerLinke2017} for a more complete overview.
Examples of pressure-robust discretizations are, e.g., based on finite elements providing exactly
divergence-free velocities, such as the Scott--Vogelius element \cite{SV85} or
$H(\div)$-conforming (hybrid) discontinuous Galerkin methods
\cite{MR2304270,LS16,MR4122492,MR3833698}. Another, more widely applicable approach is the
use of a velocity reconstruction operator, applied to test functions on the right hand side.
Especially in the case of continuous pressure approximations, the construction of a
reconstruction operator is cumbersome, since it involves solving several local problems
\cite{cont_rec}, in contrast to the case of discontinuous pressure approximations, where a local (most commonly $H(\div$)-conforming) reconstruction operator is applied locally as it was done in the pioneering work \cite{LinkeVariationalCrime}.

In this paper we present a novel method to achieve pressure-robustness, based on
interpolation operators, for finite elements for the Stokes equations with a
\emph{continuous} pressure approximation.
It combines the advantages of classical finite
element methods -- a simple implementation and optimal error convergence -- with pressure-robustness \textbf{without} the need of solving local (patch-wise) problems.
The method is based on the interpolation of the force term $f$ by a Nédélec interpolation operator
$\NedI{k-1}$ of first kind and of suitable order $k \in \mathbb{N}$.
Exploiting the commuting diagram property (for two space dimensions)
\begin{center}
	\commutingdiagram
\end{center}
where $\Lag{k}(\mathcal{T}_h)$ is a Lagrange finite element space of order $k$ and
$\Lag{k}$ the corresponding interpolation operator, we
show that replacing $(f,v_h)$ by $(\NedI{k-1} f, v_h)$ in the
right hand side of the discrete momentum equation results in a pressure-robust method. Note, that throughout the paper we do not
distinguish notationally between a finite element space and its associated interpolation
operator; which of the two is meant is always clear from the context, e.g.\ $\Lag{k}$
applied to a function versus $\Lag{k}(\mathcal{T}_h)$ or $v_h\in\Lag{k}(\mathcal{T}_h)$.
Although the insertion of an interpolation operator introduces a consistency error, we
prove that this error is not only independent of the pressure but converges at one order
higher than a naive estimate would suggest, so that optimal order of convergence with
respect to the mesh size $h$ is retained.

We develop the analysis first for the simplest possible setting, the MINI element (i.e.\
piecewise linear, cubic-bubble-enriched velocities together with piecewise linear
pressures), where the method requires only the lowest-order Nédélec interpolation
operator. We then turn to the case of the family of
stabilized, equal-order $P^kP^k$ elements for arbitrary $k \geq 1$, where a residual-based
stabilization is needed for stability in the first place. We prove that the
interpolation-based consistency error remains pressure-independent -- and in fact
superconverges -- for every polynomial degree $k$. As a byproduct we also comment on the
Taylor--Hood element, which shows the limitations of the present technique: pressure
robustness is retained, but the loss of one order of approximation in the interpolation
operator (which has to be chosen to commute with the \emph{pressure} space, one degree
below the velocity space) prevents optimal convergence in the $L^2$ norm. Note, that the energy-norm error still converges optimally.

This paper is organized as follows. \Cref{sec:preliminaries} collects the necessary
background: the variational theory of the Stokes equations and of mixed finite elements,
the mechanism behind the lack of pressure-robustness and the classical remedy by
reconstruction operators, the de Rham complex together with $H(\curl)$-conforming Nédélec
finite elements, and -- as the main technical tool for everything that follows -- a
proposition showing that the interpolation-based consistency error is superconvergent. \Cref{sec:method} introduces the novel method. \Cref{sec:mini} carries
out the complete error analysis for the MINI element. \Cref{sec:stabpkpk} treats the
general stabilized $P^kP^k$ element and contains our main pressure-robustness and
optimal-order convergence result for arbitrary polynomial degree. \Cref{sec:taylorhood}
briefly discusses the Taylor--Hood element as a cautionary example, and \Cref{sec:discussion_regularity} discusses the regularity assumptions needed for the analysis and their implications for the convergence rates.
Finally, \Cref{sec:numerics} presents numerical experiments that confirm the theoretical findings.

\section{Preliminaries}
\label{sec:preliminaries}

\subsection{The weak formulation of the Stokes equations}
\label{ssec:stokes}
Multiplying \eqref{eq:Stokes_line1} by
$v \in V \coloneqq [H^1_0(\Omega)]^2$ and \eqref{eq:Stokes_line2} by $q \in
Q\coloneqq L^2_0(\Omega)$, where $L^2_0(\Omega)$ is the space of functions in $L^2(\Omega)$ with zero mean, gives, after integration by parts, the variational problem: find $(u,p)\in V\times Q$ such
that
\begin{subequations}\label{eq:weak_stokes}
\begin{alignat}{2}
	\nu \, a(u,v)+b(v,p)&=(f,v)_{L^2} && \quad \forall v \in V,\\
	b(u,q)&=0 && \quad \forall q \in Q,
\end{alignat}
\end{subequations}
with the bilinear forms $a(u,v) \coloneqq (\nabla u, \nabla v)_{L^2}$ and
$b(u,p)\coloneqq -(\div(u),p)_{L^2}$. Note, that throughout the paper we omit the domain from norms, seminorms and inner products
taken over the whole set $\Omega$, e.g.\ $\|\cdot\|_{H^1}\coloneqq\|\cdot\|_{H^1(\Omega)}$,
$|\cdot|_{H^j}\coloneqq|\cdot|_{H^j(\Omega)}$ and $(\cdot,\cdot)_{L^2}\coloneqq(\cdot,\cdot)_{L^2(\Omega)}$,
and only write the domain explicitly when it is a proper subset of $\Omega$, such as an
element $T$, an edge patch $\omega_E$ or the boundary $\partial\Omega$. Problem \eqref{eq:weak_stokes} has the
structure of a saddle point problem, the pressure playing the role of a Lagrange multiplier
for the incompressibility constraint. By the standard theory of saddle-point problems,
see  \cite{brezzi1974, braess2013,BoffiBrezziFortin}, it is well posed provided that $a(\cdot,\cdot)$ is coercive on the
kernel $V_0 \coloneqq \{v \in V : b(v,q)=0 \; \forall q\in Q\}$ and that $b(\cdot,\cdot)$
satisfies the inf-sup (LBB) condition
\begin{align*}
	\sup_{v \in V\backslash \{0\}} \frac{b(v,q)}{\|v\|_{H^1}} \geq \beta \, \|q\|_{L^2}
	\qquad \forall q \in Q,
\end{align*}
with a constant $\beta>0$ independent of $\nu$; both properties hold for the Stokes problem
\cite{GiraultRaviart86}. Later we use the Aubin--Nitsche technique
\cite{aubin1967,NITSCHE1968} to derive $L^2$ error estimates for the velocity, which
requires the \emph{dual} Stokes problem: for $e \in L^2(\Omega)$, find $(w,r)\in V\times Q$
such that
\begin{subequations}	\label{eq:dual_stokes}
\begin{alignat}{2}
	\nu \, a(v,w)+b(v,r)&=(e,v) && \quad \forall v \in V,\\
	b(w,q)&=0 && \quad \forall q \in Q.
\end{alignat}
\end{subequations}

\subsection{Mixed finite elements and the classical error estimate}
\label{ssec:mixedfem}
Let $\mathcal{T}_h$ be a shape-regular, quasi-uniform triangulation of $\Omega$ of mesh size
$h = \max_{T\in\mathcal{T}_h} h_T$, with edge set $\mathcal{E}_h$, and let $V_h \subset V$, $Q_h \subset Q$ be conforming
finite element spaces. We abbreviate $\sum_T \coloneqq \sum_{T\in\mathcal{T}_h}$ and
$\sum_E \coloneqq \sum_{E\in\mathcal{E}_h}$; sums restricted to a proper subset of edges
(e.g.\ $\mathcal{E}_h^{\mathrm o}$ - the set of interior edges, $\mathcal{E}_h^\partial$ - the set of boundary edges) are always written with the
subset given explicitly. The
discrete Stokes problem reads: find $(u_h,p_h)\in V_h\times Q_h$ such that
\begin{alignat}{2}
	\label{eq:disc_stokes}
	\nu \, a(u_h,v_h)+b(v_h,p_h)&=(f,v_h) && \quad \forall v_h \in V_h,\\
	b(u_h,q_h)&=0 && \quad \forall q_h \in Q_h, \notag
\end{alignat}
and is well posed provided the discrete LBB-condition
\begin{align}
	\label{eq:discreteLBB}
	\sup_{v_h \in V_h\backslash \{0\}} \frac{b(v_h,q_h)}{\|v_h\|_{H^1}} \geq \beta_h \, \|q_h\|_{L^2}
	\qquad \forall q_h \in Q_h,
\end{align}
holds for some $\beta_h>0$; coercivity is inherited from the continuous problem since
$V_h\subset V$ and $a(\cdot,\cdot)$ is coercive on $V$ (and not only on the kernel $V_0$). If \eqref{eq:discreteLBB} holds, the classical (Céa-type) error estimate
\begin{align}
	\label{eq:classical_error}
	\|u-u_h\|_{H^1} \leq C \left( \inf_{v_h \in V_h} \|u-v_h\|_{H^1}
	+ \frac{1}{\nu} \inf_{q_h \in Q_h}\|p-q_h\|_{L^2} \right),
\end{align}
holds with a constant $C$ depending only on the continuity/coercivity constants of
$a(\cdot,\cdot)$, the continuity constant of $b(\cdot,\cdot)$ and $\beta_h^{-1}$, but not on
$\nu$; see \cite{BrezziFortinMixedAndHybridFEM91,GiraultRaviart86} for a proof. From
here on we write $A \lesssim B$ for $A \leq CB$ with a constant $C>0$ that depends only on
the shape-regularity of $\mathcal{T}_h$, the domain $\Omega$ and, where applicable, the
polynomial degree $k$ and the stabilization parameter $\alpha$ -- but \emph{never} on the
mesh size $h$ or the viscosity $\nu$.

\begin{remark}[Kernel inclusion]
	\label{rem:kernel_inclusion}
	If the discrete kernel
	\begin{align} \label{eq:discrete_kernel}
		V_{h,0}\coloneqq\{v_h\in V_h : b(v_h,q_h)=0\ \forall q_h\in Q_h\},
	\end{align}
	is a subset of the continuous kernel $V_0$, the pressure term drops out of
	\eqref{eq:classical_error} entirely and the velocity error becomes a genuine
	best-approximation estimate,
	$\|u-u_h\|_{H^1} \leq C \inf_{v_h \in V_h} \|u-v_h\|_{H^1}$.
	This kernel-inclusion property, however, fails to hold for essentially all commonly used
	$H^1$-conforming finite element pairs \cite{VolkerLinke2017}, except for some special cases, e.g. the Scott-Vogelius elements, which is the root cause of the loss of pressure-robustness discussed next.
	\eremk
\end{remark}

\subsection{Pressure-robustness and reconstruction operators}
\label{ssec:pressure_robustness}
Estimate \eqref{eq:classical_error} shows that the velocity error is polluted by the
best-approximation error of the pressure, multiplied by $1/\nu$. For small viscosities this
term can dominate the error entirely. We call a method \emph{pressure-robust} if the
velocity error is independent of the error of the pressure.

To understand the origin of this phenomenon, following \cite{no-flow-example1997} consider
a force given as a gradient field, $f = \nabla \Phi$. Then the continuous momentum equation
reads $\nu (\nabla u,\nabla v) - (\div(v),p) = (\nabla \Phi, v)$ for all $v\in V$; after
integration by parts this becomes $\nu (\nabla u,\nabla v)+(\div(v),\Phi-p)=0$, which is
satisfied by $u=0$, $p=\Phi$, and by uniqueness this is \emph{the} solution: gradient
forces only affect the pressure, the velocity remains unchanged. In the discrete setting,
testing whether $(0,\Pi_{Q_h}\Phi)$ (with $\Pi_{Q_h}$ the $L^2$-projection onto $Q_h$)
solves \eqref{eq:disc_stokes} leads, for a discretely divergence-free testfunction  $v_h \in V_{h,0}$, to the requirement
$(\div(v_h),\Phi)=0$. Since neither $\Phi\in Q_h$ nor $v_h \in V_0$ hold in general, this
term does not vanish unless $V_{h,0}\subset V_0$ (\Cref{rem:kernel_inclusion}): gradient
forces on the right hand side pollute \emph{both} the discrete pressure \emph{and} the
discrete velocity.

The established remedy is the use of a \emph{reconstruction operator} $\mathcal{R}$ applied
to the test function, replacing $(f,v_h)$ by $(f,\mathcal{R}v_h)$ in \eqref{eq:disc_stokes}
\cite{LinkeVariationalCrime}. If $\mathcal{R}$ maps discretely divergence-free functions to
exactly divergence-free ones and is close to the identity in the sense that
$\|v-\mathcal{R}v\|_{L^2(T)} \lesssim h^m \|v\|_{H^m(T)}$, pressure-robustness is
restored while the stiffness matrix -- and hence most of an existing implementation --
remains unchanged \cite{MR3460110}. Such operators have been constructed, e.g., for the
Crouzeix--Raviart element \cite{LinkeVariationalCrime}, for the $P^{2+}P^{1,\mathrm{disc}}$
element \cite{VolkerLinke2017}, and, at the price of solving local problems, for the MINI
element and the family of Taylor--Hood elements with \emph{continuous} pressure
approximation \cite{cont_rec}. It is exactly this last, cumbersome, construction that the
present paper avoids.

\subsection{$H(\curl)$, Nédélec elements, and the commuting diagram}
\label{ssec:nedelec}
Our construction relies on interpolation operators into $H(\curl)$-conforming finite
element spaces. For $\Omega\subset\mathbb{R}^2$, let
\begin{align*}
	H(\curl, \Omega) \coloneqq \{ u \in [L^2(\Omega)]^2 \; : \; \curl u \in L^2(\Omega) \},
	\qquad \curl(u) \coloneqq \partial_x u_2 - \partial_y u_1,
\end{align*}
equipped with the norm $\|u\|_{\curl}^2 \coloneqq \|u\|_{L^2}^2+\|\curl u\|_{L^2}^2$. As
$\curl u = -\div(u^\perp)$, the space is isomorphic to $H(\div)$ by a rotation of
$\pi/2$; in particular there is a well-defined tangential trace $(\cdot)_t : H(\curl)\to
H^{-1/2}(\Gamma)$, and $H(\curl)$-conformity is equivalent to continuity of this tangential
trace across element interfaces \cite{Nedelec1980,GiraultRaviart86}. The relevant finite
elements are the Nédélec elements, which we construct following the hierarchical,
exact-sequence-compatible approach of \cite{zaglmayr2005,szthesis2006} rather than
Nédélec's original construction \cite{Nedelec1980}; we still use his
terminology of \emph{first} and \emph{second kind}.

In the following we use the notation $P^k(T)$ for the space of polynomials of degree at most $k$ on a triangle $T$, and $[P^k(T)]^2$ for the vector-valued version.

\paragraph{Lowest order elements} On a triangle $T$ with vertices $v_0,v_1,v_2$, edges
$E_i$ (opposite $v_i$) and barycentric coordinates $\lambda_i \in P^1(T)$, the lowest order Nédélec
space of first kind is
\begin{align*}
	\NedI{0}(T) \coloneqq \{ a+c(y,-x)^T : a\in\mathbb{R}^2,\ c\in\mathbb{R}\},
	\qquad [P^0(T)]^2 \subset \NedI{0}(T) \subset [P^1(T)]^2,
\end{align*}
with degrees of freedom $v \mapsto \int_E v\cdot\tau_E\,\ds$, $\tau_E$ the tangent of $E$,
and dual basis (edge shape functions) $\varphi_E = \lambda_0\nabla\lambda_1 -
\nabla\lambda_0\,\lambda_1$ for $E=(v_0,v_1)$. The associated \emph{lowest order Nédélec
interpolation operator of first kind} is then given by
$\NedI{0} v \coloneqq \sum_E \big(\int_E
v\cdot\tau_E\,\ds\big)\varphi_E$.
Enlarging the degrees of freedom to
$\int_E v\cdot\tau_E\,p\,\ds$ for $p\in P^1(E)$ produces the lowest order Nédélec space of
\emph{second kind}, $\NedII{1}(T) = [P^1(T)]^2$, with dual basis
$\varphi_E^0$ (as above) and $\varphi_E^1 = \nabla b_E$, where $b_E = \lambda_0 \lambda_1$ denotes the quadratic
edge bubble of $E = (v_0, v_1)$; the corresponding operator $\NedII{1}$
satisfies $\|u-\NedII{1} u\|_{L^2} \lesssim
h^2|u|_{H^2}$. As $\varphi_E^0$ is shared by both operators, their difference
localizes to gradients of edge bubbles,
\begin{align}
	\label{eq:diff_ned_op_lowest}
	\NedII{1} f - \NedI{0} f
	= \sum_E c_E \nabla b_E, \qquad
	c_E \coloneqq \int_E f\cdot\tau_E\, p_E \,\ds, \quad p_E \in P^1(E)  \,/\, \mathbb{R},
\end{align}
and a duality argument together with the inverse inequality gives -- for smooth enough functions $f$ -- the coefficient bound
\begin{align}
	\label{eq:cE_bound}
	|c_E| \lesssim h\, |f|_{H^1(T_E)},
\end{align}
for a triangle $T_E$ incident to $E$; see \cite{szthesis2006}.

\paragraph{Higher order elements} For $k>0$ the spaces $\NedI{k}$ (of
dimension $k^2+4k+3$, containing $[P^k]^2$ enriched by curl-generating polynomials of one
higher degree) and $\NedII{k} = [P^k]^2$ are built from explicit
hierarchical shape functions in terms of (scaled, integrated) Legendre polynomials, split
into edge-based and element-based contributions, with degrees of freedom of moment type on
edges and elements; the full, index-heavy construction can be found in
\cite{zaglmayr2005,szthesis2006}. They satisfy the inclusion
\begin{align*}
	[P^0]^2 \subset \NedI{0}, \qquad \NedI{j-1} \subset \NedII{j} = [P^j]^2 \subset \NedI{j}
	\qquad \text{for every } j\geq 1,
\end{align*}
and their interpolation operators satisfy the estimates
\begin{align}
	\label{eq:nedelec_general_error}
	\|u - \NedI{k} u \|_{H^m} \lesssim h^{j-m} |u|_{H^j},
	\qquad
	\|u - \NedII{k} u\|_{L^2} \lesssim h^{k+1} |u|_{H^{k+1}},
\end{align}
for $0 \leq m \leq j \leq k+1$ and $j \ge 1$; see \cite{BoffiBrezziFortin} and
\cite[Ch.~4]{szthesis2006}. As in the lowest order case, all basis functions of
$\NedI{k}$ are shared by $\NedII{k+1}$ except for the
highest order edge shape functions and a family of element shape functions, which are all
of the form $\nabla(\text{polynomial})$, thus have a vanishing $\curl$; consequently
\begin{align}
	\label{eq:diff_ned_op_general}
	\NedII{k+1} f - \NedI{k} f
	= \sum_E \alpha_E \varphi_{k+1}^E + \sum_T\sum_{r=0}^{k-1}\beta_T^r\,\psi^T_{r,k-1-r},
\end{align}
with $\varphi_{k+1}^E$ supported on the edge patch $\omega_E$ and $\psi^T_{r,s}$ supported
on $T$, both of gradient type, and coefficients $\alpha_E \coloneqq \int_E f\cdot\tau_E\,
p_E^{k+1}\,\ds$, $\beta_T^r \coloneqq \int_T f\cdot q_T^r \dx$
(for suitably scaled
$p_E^{k+1}\in P^{k+1}(E)$, $q_T^r\in\nabla P^{r+1}_{0}(T)$, where $P^{r+1}_{0}(T)$ denotes the space  of polynomials of degree at most $r+1$ with vanishing trace on $\partial T$) obeying, exactly as in
\eqref{eq:cE_bound},
\begin{align}
	\label{eq:alphabeta_bound}
	|\alpha_E| \lesssim h^{k+1}|f|_{H^{k+1}(T_E)}, \qquad |\beta_T^r| \lesssim h^{k+1}|f|_{H^{k+1}(T)};
\end{align}
we again refer to \cite{szthesis2006} for the (elementary but lengthy) verification, which
proceeds by duality exactly as for \eqref{eq:cE_bound}.

Finally, the connection between the Lagrange spaces $\Lag{k}(\mathcal{T}_h)$ (built
from the same hierarchical family so as to be exact-sequence compatible with the Nédélec
spaces, see \cite{zaglmayr2005,szthesis2006}) and the Nédélec spaces is given by the
discrete de Rham complex and the commuting diagram property.

\begin{theorem}[de Rham complex]
	\label{thm:deRham}
	Let $\Omega\subset\mathbb{R}^2$ be a bounded, simply-connected Lipschitz domain. The
	sequence
	$\mathbb{R} \xrightarrow{\mathrm{id}} H^1(\Omega) \xrightarrow{\nabla} H(\curl,\Omega)
	\xrightarrow{\curl} L^2(\Omega)$
	is exact, and so is its discrete counterpart
	$\Lag{k}(\mathcal{T}_h) \xrightarrow{\nabla} \NedI{k-1}(\mathcal{T}_h)$;
	see \cite{szthesis2006}.
\end{theorem}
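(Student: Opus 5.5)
The plan has two parts: exactness of the continuous sequence on a simply connected domain, and exactness of the discrete piece $\Lag{k}(\mathcal{T}_h)\xrightarrow{\nabla}\NedI{k-1}(\mathcal{T}_h)$. In the discrete part I read "exact" in the sense needed later: $\nabla\Lag{k}(\mathcal{T}_h)$ is precisely the curl-free subspace of $\NedI{k-1}(\mathcal{T}_h)$, and the kernel of $\nabla$ on $\Lag{k}(\mathcal{T}_h)$ is $\mathbb{R}$.

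\textbf{Continuous sequence.} I would check exactness at each node in turn.
At $H^1(\Omega)$, $\nabla\phi=0$ with $\Omega$ connected forces $\phi$ to be constant, so $\ker\nabla=\operatorname{im}(\mathrm{id})$.
At $H(\curl,\Omega)$, clearly $\curl\nabla\phi=0$ in the distributional sense, so $\operatorname{im}\nabla\subset\ker\curl$. For the converse, take $u\in H(\curl,\Omega)$ with $\curl u=0$. Then $u^\perp\in H(\div,\Omega)$ with $\div u^\perp=0$. On a simply connected Lipschitz domain this gives a stream function $\psi\in H^1(\Omega)$ with $u^\perp$ equal to the rotated gradient of $\psi$ (Girault--Raviart, Thm.~I.3.1). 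Rotating back yields $u=\nabla\phi$ with $\phi=\pm\psi\in H^1(\Omega)$.
Surjectivity of $\curl$ onto $L^2(\Omega)$ is not needed for the statement as written, but it also follows: solve $\Delta\chi=g$ in $H^1_0(\Omega)$, or extend $g$ by zero to a ball, and take $u$ to be the rotated gradient of $\chi$.

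\textbf{Discrete sequence.}
First, $\nabla\Lag{k}(\mathcal{T}_h)\subset\NedI{k-1}(\mathcal{T}_h)$. Elementwise, $\nabla P^k(T)\subset[P^{k-1}(T)]^2\subset\NedI{k-1}(T)$. For $\phi_h$ continuous, its tangential derivative along each edge is single-valued, so $\nabla\phi_h$ has continuous tangential trace and is $H(\curl)$-conforming.
Second, $\ker\nabla|_{\Lag{k}}=\mathbb{R}$, which is immediate from the continuous case.
Third, and this is the main step, take $u_h\in\NedI{k-1}(\mathcal{T}_h)$ with $\curl u_h=0$. By the continuous exactness, $u_h=\nabla\phi$ for some $\phi\in H^1(\Omega)$. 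On each $T$, $\nabla\phi$ is polynomial of degree $\le k$ (since $\NedI{k-1}\subset[P^k]^2$), so $\phi|_T$ is a polynomial of degree $\le k+1$. Its gradient is curl-free and lies in $\NedI{k-1}(T)$. The local structure $\NedI{k-1}(T)=[P^{k-1}]^2\oplus(\text{homogeneous degree-}k\text{ fields } q \text{ with } q\cdot x=0)$ forces the degree-$k$ homogeneous part of $\nabla\phi$ to be simultaneously a gradient and orthogonal to $x$. By Euler's identity, such a part $\nabla\phi_{k+1}$ satisfies $x\cdot\nabla\phi_{k+1}=(k+1)\phi_{k+1}$, so it vanishes. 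Hence $\phi|_T\in P^k(T)$.
Since $\phi\in H^1(\Omega)$ is piecewise polynomial, it is continuous across edges, so $\phi\in\Lag{k}(\mathcal{T}_h)$.

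\textbf{Alternative and main obstacle.} The same conclusion could be reached by a dimension count: Euler's formula on the simply connected mesh together with the dimensions of the hierarchical shape functions of \cite{zaglmayr2005,szthesis2006}, where the gradient-type basis functions of $\NedI{k-1}$ are exactly gradients of Lagrange vertex, edge and cell functions. The main obstacle is the local step: verifying that the curl-free part of $\NedI{k-1}(T)$ equals $\nabla P^k(T)$. I would handle it by the Euler-identity argument above rather than by the index-heavy basis description.
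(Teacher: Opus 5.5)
Your proof is correct. It takes a different route from the paper, which gives no argument and only cites Zaglmayr's thesis. There, discrete exactness is built into the hierarchical basis: the gradient-type N\'ed\'elec shape functions are by construction gradients of the $H^1$ vertex, edge and cell functions, the remaining shape functions have linearly independent curls, and exactness follows from this local exact-sequence structure together with counting.

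You instead prove the continuous part via the stream-function theorem and then reduce the discrete part to it. A curl-free $u_h$ equals $\nabla\phi$ with $\phi\in H^1(\Omega)$, so $\phi|_T\in P^{k+1}(T)$. The Euler identity removes the top homogeneous part, and $H^1$-conformity gives continuity, so $\phi\in\Lag{k}(\mathcal{T}_h)$. This gives a short, basis-free, self-contained proof in which simple connectivity enters only through the continuous result. Zaglmayr's route gives exactness directly for the implemented spaces, together with the element-level splitting that the paper later uses in \eqref{eq:diff_ned_op_general}. Your reading of the two-term discrete ``sequence'' (kernel of $\nabla$ equal to $\mathbb{R}$, and curl-free part of $\NedI{k-1}(\mathcal{T}_h)$ equal to $\nabla\Lag{k}(\mathcal{T}_h)$) is the right one.

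One caveat concerns your local step. It uses the classical characterization $\NedI{k-1}(T)=[P^{k-1}(T)]^2\oplus S_k$, with $S_k$ the homogeneous degree-$k$ fields $q$ satisfying $q\cdot x=0$. This matches the paper's explicit $\NedI{0}$ and its informal description. However, the paper says the higher-order spaces are built from the hierarchical Sch\"oberl--Zaglmayr basis, obtained by dropping the top-order gradient functions, and that span need not coincide literally with N\'ed\'elec's space for $k\geq 2$. For those spaces, use the rank--nullity version of your alternative instead. The three facts $\nabla P^k(T)\subset\NedI{k-1}(T)$, $\curl\NedI{k-1}(T)=P^{k-1}(T)$ and $\dim\NedI{k-1}(T)=k^2+2k$ give $\dim\ker\curl|_{\NedI{k-1}(T)}=\dim P^k(T)-1=\dim\nabla P^k(T)$, hence equality. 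The rest of your argument then goes through unchanged.
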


\begin{theorem}[Commuting diagram]
	\label{thm:commuting}
	Under the assumptions of \Cref{thm:deRham}, the diagram
	\begin{center}
		\commutingdiagram
	\end{center}
	commutes, i.e.\ $\NedI{k-1} (\nabla \Phi) = \nabla(\Lag{k} \Phi)$ for all $\Phi \in H^2(\Omega)$;
	see \cite{szthesis2006}.
\end{theorem}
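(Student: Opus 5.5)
The plan is to reduce the commuting property to a statement about degrees of freedom. Both $\NedI{k-1}(\nabla\Phi)$ and $\nabla(\Lag{k}\Phi)$ lie in $\NedI{k-1}(\mathcal{T}_h)$; the first by definition, the second by the discrete complex in \Cref{thm:deRham}. Functions in $\NedI{k-1}(\mathcal{T}_h)$ are uniquely determined by their moment degrees of freedom, so it suffices to show that $\nabla\Phi$ and $\nabla(\Lag{k}\Phi)$ have the same edge and element moments. Equivalently, with $\Psi\coloneqq\Phi-\Lag{k}\Phi\in H^2(\Omega)$, every Nédélec degree of freedom of $\nabla\Psi$ must vanish. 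For $\Phi\in H^2$ these degrees of freedom are well defined, since $\nabla\Phi\in H^1$ has edge traces in $L^2(E)$, and $\Phi$ is continuous, so $\Lag{k}$ is well defined.

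The argument proceeds element by element on $T$, in three steps.

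\emph{Vertex step.} By the definition of the Lagrange interpolant, $\Psi$ vanishes at all vertices.

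\emph{Edge step.} The edge moments of $\NedI{k-1}$ are $\int_E \nabla\Psi\cdot\tau_E\, p\ds$ for $p\in P^{k-1}(E)$. Integration by parts along $E$ gives
\begin{align*}
\int_E \partial_{\tau}\Psi\, p\ds = [\Psi p]_{\partial E} - \int_E \Psi\, \partial_\tau p\ds .
\end{align*}
The boundary term vanishes by the vertex step. Since $\partial_\tau p$ ranges over $P^{k-2}(E)$, the remaining term vanishes provided the Lagrange edge degrees of freedom are $\int_E \Psi\, q\ds$ for $q\in P^{k-2}(E)$.

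\emph{Element step.} The interior moments of $\NedI{k-1}$ split into gradient-type test functions and the remaining ones. For gradient-type tests $\nabla q$ with $q\in P^{k}_0(T)$ (up to the matching degree), integrate by parts on $T$:
\begin{align*}
\int_T\nabla\Psi\cdot\nabla q\dx = -\int_T\Psi\,\Delta q\dx + \int_{\partial T}\Psi\,\partial_n q\ds .
\end{align*}
For the remaining, rotational moments, integrate by parts as $\int_T \nabla\Psi\cdot w\dx = -\int_T\Psi\div w\dx+\int_{\partial T}\Psi\, w\cdot n\ds$, where the boundary term is controlled by the edge step. Both volume terms must vanish because of the interior Lagrange moments $\int_T\Psi\, r\dx$ for $r\in P^{k-3}(T)$. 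The boundary terms vanish because $\Psi|_E$ is orthogonal to $P^{k-2}(E)$, and the normal traces of the test fields on each edge are of degree at most $k-2$.

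The real content, and the main obstacle, is the compatibility of the two sets of degrees of freedom. The Lagrange and Nédélec degrees of freedom must be chosen so that each Nédélec functional applied to a gradient equals a combination of Lagrange functionals. Using only the nodal (point-value) Lagrange interpolant, this fails in general for $k\geq2$. This is exactly why the paper builds $\Lag{k}$ from the same hierarchical family as in \cite{zaglmayr2005,szthesis2006}, with moment-type degrees of freedom: vertex values, edge moments against $P^{k-2}(E)$, and interior moments against $P^{k-3}(T)$.

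So the first thing to do is to fix these moment-based Nédélec degrees of freedom, namely tangential edge moments against $P^{k-1}(E)$ and interior moments against $[P^{k-2}(T)]^2$. Then comes a dimension check: $\div$ maps $[P^{k-2}]^2$ onto $P^{k-3}$ and the bubble part is handled via the edge step, so that the interior identities close. If the hierarchical degrees of freedom of \cite{szthesis2006} differ, the fallback is to argue via unisolvence instead. One shows that $\NedI{k-1}\nabla$ and $\nabla\Lag{k}$ are both projections that agree on the image $\nabla\Lag{k}(\mathcal{T}_h)$, and that their difference annihilates the Nédélec degrees of freedom dual to the gradient basis functions. In the hierarchical construction the gradient basis functions are exactly the gradients of the Lagrange basis functions, so this property holds by construction. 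This reduces the proof to a direct verification on the reference element, followed by covariant (Piola) mapping to each $T$.
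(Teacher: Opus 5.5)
Your main argument is correct, but it takes a different route from the paper. The paper gives no argument of its own and defers to the hierarchical, exact-sequence-compatible construction of \cite{szthesis2006}; you give the classical self-contained degree-of-freedom proof. With $\Psi=\Phi-\Lag{k}\Phi$, the tangential edge moments of $\nabla\Psi$ against $P^{k-1}(E)$ reduce to Lagrange edge moments against $P^{k-2}(E)$, after one integration by parts along $E$ and using the vanishing vertex values. The interior moments against $w\in[P^{k-2}(T)]^2$ reduce, via $\int_T\nabla\Psi\cdot w\dx=-\int_T\Psi\div w\dx+\int_{\partial T}\Psi\,w\cdot n\ds$, to Lagrange interior moments against $P^{k-3}(T)$ and edge moments against $P^{k-2}(E)$.

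What your route buys is that it makes explicit which Lagrange functionals are required. Your remark that the nodal interpolant does not commute for $k\ge2$ is correct: for $k=2$ the edge condition is $\int_E\Psi\ds=0$, but nodal $P^2$ interpolation only reproduces Simpson's rule, which is inexact for quartic traces. The paper encodes this only implicitly, by requiring $\Lag{k}$ to come from the same hierarchical family. The citation route, in turn, supplies the hierarchical basis structure used later (the gradient-type shape functions in $\NedII{k+1}f-\NedI{k}f$), which your proof neither provides nor needs.

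Two spots need tidying.

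\emph{The ``gradient-type'' sub-step.} It is wrong as written. For $q\in P^k_0(T)$, $\nabla q$ is in general not a test field of $\NedI{k-1}$, and $\Delta q\in P^{k-2}(T)$ and $\partial_n q|_E\in P^{k-1}(E)$ are not annihilated by the Lagrange moments you list. The sub-step is also superfluous. The identity above already covers every $w\in[P^{k-2}(T)]^2$, and it uses only the inclusions $\div w\in P^{k-3}(T)$ and $w\cdot n|_E\in P^{k-2}(E)$; surjectivity of $\div$ plays no role.

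\emph{The ``fallback''.} This is not a proof. Knowing that the gradient basis functions are gradients of Lagrange basis functions says nothing about how the interpolation functionals act on non-discrete $\Phi$. Replace it with the observation that a canonical interpolant depends only on the span of its defining functionals. Any hierarchical basis with the same moment-type functionals (as the paper asserts) therefore gives the same operators, and your argument applies verbatim.
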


\begin{remark}
	The assumption that $\Phi \in H^2(\Omega)$, rather than merely $H^1(\Omega)$, is what makes the
	nodal degrees of freedom of $\Lag{k}$ -- present already for the vertex functions at every
	order $k\ge1$ -- classically well-defined, via the Sobolev embedding $H^2(\Omega)\subset
	C^0(\bar\Omega)$ in two dimensions; $H^1(\Omega)$ alone is the borderline case and does
	\emph{not} embed into $C^0(\bar\Omega)$.
	\eremk
\end{remark}

We also record the classical Lagrange interpolation error estimate,
\begin{align}
	\label{eq:lagrange_error}
	\|u - \Lag{k} u\|_{H^m} \lesssim h^{j-m}|u|_{H^j}, \qquad 0\leq m \leq j \leq k+1, \ \ j \ge 2,
\end{align}
as well as the analogous estimate for the $L^2$-orthogonal
projection $\Pi_\omega^k$ onto $P^k(\omega)$,
\begin{align}
	\label{eq:l2proj_error}
	\|u-\Pi_\omega^k u\|_{H^m(\omega)} \lesssim h^{j-m}|u|_{H^j(\omega)}, \qquad 0\leq m \leq j \leq k+1;
\end{align}
see \cite{ern2021finite}.

Additionally, we also need an $H^1$-stable Cl\'ement-like quasi interpolation operator $\Cle{1}: H^1(\Omega) \to \Lag{1}(\mathcal{T}_h)$ that preserves (homogeneous) boundary data, and satisfies the following approximation properties \cite{Clement1975,ScottZhang1990,ern2021finite}:
\begin{align}
	\label{eq:quasi_interpolation}
	\|u-\Cle{1} u\|_{L^2(T)} + h \|u-\Cle{1} u\|_{H^1(T)} \lesssim h^j |u|_{H^j(\omega_T)}, \qquad 1\leq j \leq 2,
\end{align}
for all $T\in\mathcal{T}_h$, where $\omega_T$ is the patch of elements sharing a vertex with $T$. Particularly, this induces the stability property
\begin{align}
	\label{eq:quasi_interpolation_stability}
	\|\Cle{1} u\|_{H^1} \lesssim \|u\|_{H^1}.
\end{align}

Note that in order to simplify the notation, we do not distinguish between vector valued and scalar-valued version of the interpolation operators $\Lag{k}$ and $\Cle{1}$, as the context will always make it clear which one is meant.

\subsection{A superconvergent consistency error}
\label{ssec:consistency}
This subsection contains the key technical ingredient of the paper: an estimate for the
quantity $(f-\NedI{k} f, g)_{L^2}$ that converges one order faster than the naive Cauchy--Schwarz
bound $\|f-\NedI{k} f\|_{L^2}\|g\|_{L^2} \lesssim
h^{k+1}\|f\|_{H^{k+1}}\|g\|_{L^2}$ would suggest. It is precisely this superconvergence that
allows the interpolated method to retain the optimal order of convergence (of the $L^2$-norm error) despite the
consistency error introduced by the interpolation of the force.

We first record two auxiliary results. Recall that $\mathcal{E}_h = \mathcal{E}_h^{\mathrm o}
\, \cup\, \mathcal{E}_h^\partial$ splits into inner and boundary edges, respectively, and that the
edge patch of a boundary edge consists of a single triangle.

\begin{lemma}[Orthogonality of edge bubbles]
	\label{lem:orthogonality}
	Let $E\in\mathcal{E}_h^{\mathrm o}$ and let $\nabla b_E$ be the associated edge bubble
	gradient. Then $(\nabla b_E, \xi)_{L^2(\omega_E)} = 0$ for every constant vector
	$\xi\in\mathbb{R}^2$.
\end{lemma}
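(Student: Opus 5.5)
Since $\xi$ is a constant vector, $\xi = \nabla(\xi\cdot x)$, so the claim reduces to an integration by parts on the edge patch $\omega_E = T_1\cup T_2$. The key structural fact is that $b_E = \lambda_0\lambda_1$ (product of the barycentric coordinates of the two endpoints of $E$) is, as a piecewise quadratic on $\omega_E$, continuous across $E$ and vanishes on $\partial\omega_E$. On the boundary of $\omega_E$ each of the other four edges lies opposite either $v_0$ or $v_1$ in $T_1$ or $T_2$, so one of the two factors vanishes there. Hence $b_E \in H^1_0(\omega_E)$. For an interior edge $E$ the patch genuinely consists of two triangles, and $b_E$ extended by zero is in $H^1_0(\Omega)$; this is exactly why the lemma is stated only for $E\in\mathcal{E}_h^{\mathrm o}$.

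With this in hand, I would compute
\begin{align*}
(\nabla b_E,\xi)_{L^2(\omega_E)} = \int_{\omega_E}\nabla b_E\cdot\xi\dx = -\int_{\omega_E} b_E\,\div\xi\dx + \int_{\partial\omega_E} b_E\,\xi\cdot n\ds = 0 .
\end{align*}
The volume term vanishes because $\div\xi=0$ for constant $\xi$. The boundary term vanishes because $b_E|_{\partial\omega_E}=0$. If one prefers to work elementwise, integrating by parts on $T_1$ and $T_2$ separately produces additional contributions on $E$. These are $\int_E b_E\,\xi\cdot n_{T_1}\ds + \int_E b_E\,\xi\cdot n_{T_2}\ds$, and they cancel since $n_{T_1}=-n_{T_2}$ and $b_E$ is single-valued on $E$. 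The remaining parts of $\partial T_i$ carry $b_E=0$.

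The only point requiring care, and thus the main (mild) obstacle, is verifying the trace properties of $b_E$: continuity across $E$, and vanishing on the outer edges of the patch. Continuity holds because the barycentric coordinates $\lambda_0,\lambda_1$ of the shared vertices restrict to the same affine functions on $E$ from both sides. Vanishing on the outer edges is the factor argument given above. An alternative sanity check is a direct computation with the $1$-point formula: $\int_T \lambda_0\lambda_1 \dx$ terms do not even enter, since only $\int_{T_i}\nabla b_E\dx = \int_{\partial T_i} b_E n\ds = n_{T_i}\int_E b_E\ds$, and these two terms cancel.

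For a boundary edge the argument fails, since $\omega_E$ is a single triangle and $\int_T\nabla b_E\dx = n_T|E|/6\neq0$. This confirms that interior edges are the correct hypothesis.
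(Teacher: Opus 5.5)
Your proof is correct and is essentially the paper's argument: integrate by parts over $\omega_E$, then use $\div\xi=0$ and the fact that $b_E$ vanishes on $\partial\omega_E$. Your checks of the trace properties of $b_E$, the elementwise cancellation across $E$, and the nonzero value $n_T|E|/6$ for boundary edges fill in details the paper leaves implicit.
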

\begin{proof}
	Integration by parts gives
	\begin{align*}
		(\nabla b_E,\xi)_{L^2(\omega_E)} = (b_E n, \xi)_{L^2(\partial\omega_E)} - (b_E,\div(\xi))_{L^2(\omega_E)} = 0,
	\end{align*}
	since $b_E$ vanishes on $\partial\omega_E$ and $\xi$ is constant.
\end{proof}

\begin{figure}[h]
	\centering
	\begin{tikzpicture}[scale=1]
		\draw[line width=0.5pt, fill=black!10]
			(84.517:1.050) arc (84.517:302.475:1.050)
			arc (122.475:64.891:0.140)
			arc (244.891:502.101:1.300)
			arc (322.101:264.517:0.140) -- cycle;
		\draw[line width=0.3pt, fill=black!2, draw=black!55, dashed]
			(84.517:0.900) arc (84.517:302.475:0.900)
			arc (122.475:64.891:0.290)
			arc (244.891:502.101:1.150)
			arc (322.101:264.517:0.290) -- cycle;
		\node at (0.6,0.1) {$\Omega$};
		\node at (3.2,1.25) {$\Omega_\varepsilon^\partial$};
		\draw[->,>=latex,shorten >=1.5pt, thick] (2.9,1.2) -- (2.30,0.83);
		\node at (-1.2,-0.75) {$\partial\Omega$};
	\end{tikzpicture}
	\caption{Tubular neighborhood $\Omega_\varepsilon^\partial$ (dark grey) of thickness $\varepsilon$ of the
	boundary $\partial\Omega$ of the domain $\Omega$ (light grey).}
	\label{fig:tubular_neighborhood}
\end{figure}
The following Lemma has already been used in \cite{Faustmann_2015} and \cite{Melenk2010} to estimate boundary layer contributions in the context of finite element methods. For this we define the tubular neighborhood of the boundary $\partial\Omega$ of thickness $\varepsilon>0$, see
	\Cref{fig:tubular_neighborhood}, as
\begin{align*}
	\Omega^\partial_\varepsilon \coloneqq \{x\in\Omega : \dist(x,\partial\Omega)<\varepsilon\}.
\end{align*}

\begin{lemma}[Boundary layer estimate, {\cite{Faustmann_2015,Melenk2010}}]
	\label{lem:boundary_layer}
	Let $f\in H^1(\Omega)$ and let $\Omega^\partial_\varepsilon$ denote the tubular
	neighborhood of $\partial\Omega$ of thickness $\varepsilon>0$. Then
	\begin{align*}
		\|f\|_{L^2(\Omega^\partial_\varepsilon)} \lesssim \sqrt{\varepsilon}\,\|f\|_{L^2(\partial\Omega)}
	+ \varepsilon \|\nabla f\|_{L^2(\Omega^\partial_\varepsilon)}.
	\end{align*}
	For the particular choice $\varepsilon = h$, this gives by the trace theorem
	\begin{align*}
		\|f\|_{L^2(\Omega^\partial_h)} \lesssim \sqrt{h}\,\|f\|_{L^2(\partial\Omega)}
	+ h \|\nabla f\|_{L^2(\Omega^\partial_h)} \lesssim \sqrt{h} \|f\|_{H^1(\Omega)}.
	\end{align*}
\end{lemma}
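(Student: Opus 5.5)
We will prove the first inequality directly and then deduce the second from it. Since $\partial\Omega$ is smooth, there is $\varepsilon_0>0$ such that for $\varepsilon<\varepsilon_0$ the map $\Psi:\partial\Omega\times(0,\varepsilon)\to\Omega^\partial_\varepsilon$, $\Psi(y,t)=y-t\,n(y)$ (with $n$ the outer unit normal), is a diffeomorphism. Its Jacobian determinant $J(y,t)$ is bounded above and below uniformly, since it depends only on the curvature of $\partial\Omega$. For $\varepsilon\ge\varepsilon_0$ the estimate follows trivially: $\|f\|_{L^2(\Omega^\partial_\varepsilon)}\le\|f\|_{L^2}$, and $\Omega$ is bounded, so we may absorb this case using a Poincar\'e--Friedrichs type inequality $\|f\|_{L^2}\lesssim\|f\|_{L^2(\partial\Omega)}+\|\nabla f\|_{L^2}$. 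Strictly, this requires $\Omega^\partial_\varepsilon$ to be all of $\Omega$ for $\varepsilon$ large; for intermediate $\varepsilon$ we use that the constants depend on $\Omega$. We therefore focus on $\varepsilon<\varepsilon_0$.

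By density it suffices to take $f$ smooth up to the boundary. For fixed $y\in\partial\Omega$ and $t\in(0,\varepsilon)$ the fundamental theorem of calculus along the normal ray gives
\begin{align*}
	f(\Psi(y,t)) = f(y) - \int_0^t \nabla f(\Psi(y,s))\cdot n(y)\,ds ,
\end{align*}
and hence, by Cauchy--Schwarz,
\begin{align*}
	|f(\Psi(y,t))|^2 \le 2|f(y)|^2 + 2t\int_0^\varepsilon |\nabla f(\Psi(y,s))|^2\,ds .
\end{align*}
We integrate over $t\in(0,\varepsilon)$ and $y\in\partial\Omega$, using the change of variables with $J\simeq 1$:
\begin{align*}
	\|f\|^2_{L^2(\Omega^\partial_\varepsilon)} \lesssim \varepsilon\|f\|^2_{L^2(\partial\Omega)} + \varepsilon^2\|\nabla f\|^2_{L^2(\Omega^\partial_\varepsilon)} .
\end{align*}
Taking square roots gives the first claim.

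For $\varepsilon=h$ the first estimate applies directly. We then bound $\|f\|_{L^2(\partial\Omega)}\lesssim\|f\|_{H^1}$ by the trace theorem and $h\|\nabla f\|_{L^2(\Omega^\partial_h)}\le h\|f\|_{H^1}\lesssim\sqrt h\|f\|_{H^1}$, since $h\lesssim 1$ for a bounded domain.

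The only delicate point is the uniform control of the tubular coordinates, namely the bijectivity of $\Psi$ and two-sided bounds on $J$. This follows from smoothness of $\partial\Omega$: the reach of $\partial\Omega$ is positive, and $J(y,t)=1-t\kappa(y)$ in two dimensions. This step is also where the implied constant acquires its dependence on $\Omega$. Alternatively, as the paper indicates, the statement can be taken from \cite{Faustmann_2015,Melenk2010}.
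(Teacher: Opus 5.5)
The paper gives no proof of this lemma; it only cites \cite{Faustmann_2015,Melenk2010}. Your core argument for $\varepsilon<\varepsilon_0$ is correct and self-contained: inward normal coordinates $\Psi(y,t)=y-t\,n(y)$, the fundamental theorem of calculus along each ray, Cauchy--Schwarz, and the change of variables with $J\simeq 1$. It legitimately uses the smooth boundary assumed in the introduction; just take $\varepsilon_0$ strictly below the reach, e.g.\ half of it, so that $J\ge 1/2$. The deduction for $\varepsilon=h$ via the trace theorem and $h\lesssim 1$ is also fine.

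The gap is the range $\varepsilon_0\le\varepsilon<r_\Omega$, where $r_\Omega$ is the inradius, so that $\Omega^\partial_\varepsilon\neq\Omega$. There the Friedrichs inequality only yields $\|\nabla f\|_{L^2(\Omega)}$ on the right, while the lemma claims the localized $\|\nabla f\|_{L^2(\Omega^\partial_\varepsilon)}$. Saying that ``the constants depend on $\Omega$'' does not bridge this, and your tubular coordinates are no longer available in that range. You have two ways to close it.

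\emph{Restrict the lemma.} Take $\varepsilon\le\varepsilon_0$, which is all the paper needs, since the lemma is only applied with $\varepsilon=h$. For $h\ge\varepsilon_0$ the final bound is trivial: $\|f\|_{L^2(\Omega^\partial_h)}\le\|f\|_{L^2}\le\varepsilon_0^{-1/2}\sqrt{h}\,\|f\|_{H^1}$.

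\emph{Extend your ray argument past the reach.} Let $\tau(y)$ be the cut distance of $y\in\partial\Omega$.
\begin{itemize}
\item $\Psi$ is injective on $\{0<t<\min(\varepsilon,\tau(y))\}$.
\item Adding the null set $\{t=\tau(y)\}$, the image covers $\Omega^\partial_\varepsilon$ (take a nearest boundary point). The area formula therefore still converts both integrals, in the two directions you need.
\item $J(y,t)=1-t\kappa(y)\ge 0$ for $t\le\tau(y)$, since cut points never lie beyond focal points.
\item Instead of a lower bound on $J$, you only need $J(y,t)\le C_\Omega\,J(y,s)$ for $0\le s\le t$. This holds because $J$ is nonincreasing in $t$ when $\kappa(y)\ge 0$, and lies in $[1,\,1+\operatorname{diam}(\Omega)\|\kappa\|_{L^\infty}]$ when $\kappa(y)<0$.
\end{itemize}
Multiply your pointwise bound by $J(y,t)$ and move this weight inside the $s$-integral. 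This gives the estimate for every $\varepsilon>0$.
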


\begin{proposition}
	\label{prop:consistency_low}
	Let $f\in H^2(\Omega)$ and $g\in H^1(\Omega)$. Then
	\begin{align}
		\label{eq:consistency_low}
		(f-\NedI{0} f, g)_{L^2} \lesssim h^2 \|f\|_{H^2}\|g\|_{H^1}.
	\end{align}
\end{proposition}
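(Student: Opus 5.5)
Split $f-\NedI{0} f = (f-\NedII{1} f) + (\NedII{1} f - \NedI{0} f)$. The first part is handled directly by Cauchy--Schwarz and the second-kind estimate \eqref{eq:nedelec_general_error}:
\begin{align*}
(f-\NedII{1} f, g)_{L^2} \le \|f-\NedII{1} f\|_{L^2}\|g\|_{L^2} \lesssim h^2|f|_{H^2}\|g\|_{L^2}.
\end{align*}
So all the work lies in the difference term. By \eqref{eq:diff_ned_op_lowest} it equals $\sum_E c_E(\nabla b_E, g)_{L^2(\omega_E)}$, with $|c_E|\lesssim h|f|_{H^1(T_E)}$ by \eqref{eq:cE_bound}. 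A naive bound only yields $h$, so one further power of $h$ must come from cancellation against $g$.

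For interior edges $E\in\mathcal{E}_h^{\mathrm o}$, \Cref{lem:orthogonality} allows us to subtract the constant $\Pi^0_{\omega_E} g$:
\begin{align*}
(\nabla b_E, g)_{L^2(\omega_E)} = (\nabla b_E, g-\Pi^0_{\omega_E}g)_{L^2(\omega_E)} \le \|\nabla b_E\|_{L^2(\omega_E)}\, h\,|g|_{H^1(\omega_E)},
\end{align*}
using \eqref{eq:l2proj_error}. Since $b_E$ is bounded by one and $|\nabla b_E|\lesssim h^{-1}$ on a patch of area $\sim h^2$, we have $\|\nabla b_E\|_{L^2(\omega_E)}\lesssim 1$. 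Summing and applying the discrete Cauchy--Schwarz inequality together with the finite overlap of patches gives
\begin{align*}
\sum_{E\in\mathcal{E}_h^{\mathrm o}} |c_E|\,h\,|g|_{H^1(\omega_E)} \lesssim h^2 |f|_{H^1}|g|_{H^1}.
\end{align*}
This is the desired order.

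The main obstacle is the boundary edges, where $b_E$ does not vanish on all of $\partial\omega_E=\partial T$ and the orthogonality fails. For these we use a plain bound,
\begin{align*}
|c_E(\nabla b_E,g)_{T}| \lesssim |c_E|\,\|g\|_{L^2(T)}.
\end{align*}
The coefficient bound \eqref{eq:cE_bound} is too weak here, so we sharpen it to $|c_E|\lesssim h\|\nabla f\|_{L^2(T)}$; the duality argument behind \eqref{eq:cE_bound} indeed produces an $L^2(T)$ norm of $\nabla f$, scaled with $h^{-1}\cdot h^2$. Summing over boundary triangles, all of which lie in $\Omega^\partial_{Ch}$, yields
\begin{align*}
\sum_{E\in\mathcal{E}_h^\partial} |c_E|\,\|g\|_{L^2(T_E)} \lesssim h\,\|\nabla f\|_{L^2(\Omega^\partial_{Ch})}\,\|g\|_{L^2(\Omega^\partial_{Ch})}.
\end{align*}
We now apply \Cref{lem:boundary_layer} twice: to $g\in H^1$, giving $\|g\|_{L^2(\Omega^\partial_{Ch})}\lesssim \sqrt h\|g\|_{H^1}$, and to $\nabla f\in H^1$ (valid since $f\in H^2$), giving $\|\nabla f\|_{L^2(\Omega^\partial_{Ch})}\lesssim \sqrt h\|f\|_{H^2}$. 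Together this is $h\cdot\sqrt h\cdot\sqrt h = h^2$, which explains why the statement requires $\|f\|_{H^2}$ rather than $|f|_{H^1}$.

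Collecting the three contributions proves \eqref{eq:consistency_low}. The only delicate points are checking that the constant in the sharpened coefficient bound is local, and that the boundary strip of width $Ch$ fits \Cref{lem:boundary_layer}, which follows from quasi-uniformity.
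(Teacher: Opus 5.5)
Your proof is correct and follows the paper's proof step for step: the split through $\NedII{1} f$, orthogonality of $\nabla b_E$ to constants on interior edge patches, and \Cref{lem:boundary_layer} applied to both $g$ and $\nabla f$ on the boundary strip. No sharpening of \eqref{eq:cE_bound} is needed, since $|f|_{H^1(T_E)}=\|\nabla f\|_{L^2(T_E)}$ is already the local bound you use; and keeping the local seminorms $|g|_{H^1(\omega_E)}$ with a discrete Cauchy--Schwarz in the interior sum is the right way to do that step, because the paper's displayed chain factors out the global $|g|_{H^1}$ before summing $|c_E|$, which as written would lose a power of $h$.
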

\begin{proof}
	Adding and subtracting $\NedII{1} f$ we get
	\begin{align*}
		(f-\NedI{0} f, g)_{L^2}
		= (f-\NedII{1} f, g)_{L^2}
		+ (\NedII{1} f-\NedI{0} f, g)_{L^2}.
	\end{align*}
	The first term is bounded directly by Cauchy--Schwarz and \eqref{eq:nedelec_general_error} giving
	\begin{align*}
	(f-\NedII{1} f, g)_{L^2} \leq
	\|f-\NedII{1} f\|_{L^2}\|g\|_{L^2} \lesssim h^2|f|_{H^2}\|g\|_{L^2}.
	\end{align*}
	For the second term we use the localized representation \eqref{eq:diff_ned_op_lowest} and
	split the sum over edges into $\mathcal{E}_h^{\mathrm o}$ and $\mathcal{E}_h^\partial$. On
	inner edges, \Cref{lem:orthogonality} allows us to insert the $L^2(\omega_E)$-projection
	$\Pi^0_{\omega_E} g$ of $g$ onto constants at no cost,
	\begin{align*}
		\sum_{E\in\mathcal{E}_h^{\mathrm o}} c_E (\nabla b_E, g)_{L^2(\omega_E)}
		&= \sum_{E\in\mathcal{E}_h^{\mathrm o}} c_E (\nabla b_E, g-\Pi^0_{\omega_E}g)_{L^2(\omega_E)}\\
		&\overset{\text{C.S.}}{\leq}
		\sum_{E\in\mathcal{E}_h^{\mathrm o}} |c_E|\,\|\nabla b_E\|_{L^2(\omega_E)}\,\|g-\Pi^0_{\omega_E}g\|_{L^2(\omega_E)}\\
		&\overset{\eqref{eq:l2proj_error}}{\lesssim} h\,|g|_{H^1}\sum_{E\in\mathcal{E}_h^{\mathrm o}} |c_E|\,\|\nabla b_E\|_{L^2(\omega_E)}
		\overset{\eqref{eq:cE_bound}}{\lesssim} h^2\,|g|_{H^1}\,|f|_{H^1},
	\end{align*}
	using that $\|\nabla b_E\|_{L^2(\omega_E)}$ scales like a constant and that each triangle
	belongs to at most three patches $\omega_E$. On boundary edges, where
	\Cref{lem:orthogonality} is not available (the patch $\omega_E$ degenerates to the single
	triangle $T_E$), we argue instead via \eqref{eq:cE_bound} and \Cref{lem:boundary_layer}
	with $\varepsilon=h$,
	\begin{align*}
		\sum_{E\in\mathcal{E}_h^\partial} c_E (\nabla b_E,g)_{L^2(T_E)}
		&\overset{\text{C.S.}}{\leq} \sum_{E\in\mathcal{E}_h^\partial} |c_E|\,\|\nabla b_E\|_{L^2(T_E)}\,\|g\|_{L^2(T_E)}\\
		&\overset{\eqref{eq:cE_bound}}{\lesssim}
		h\,|f|_{H^1(\Omega_h^\partial)}\,\|g\|_{L^2(\Omega_h^\partial)}
		\overset{\text{\Cref{lem:boundary_layer}}}{\lesssim}
		h^2\|f\|_{H^2}\|g\|_{H^1}.
	\end{align*}
	Combining both estimates and neglecting higher order terms gives \eqref{eq:consistency_low}.
\end{proof}

The same argument, applied to the higher-order difference formula
\eqref{eq:diff_ned_op_general} and the coefficient bounds \eqref{eq:alphabeta_bound} instead
of \eqref{eq:diff_ned_op_lowest} and \eqref{eq:cE_bound}, yields the following
generalization;

\begin{proposition}
	\label{prop:consistency_high}
	Let $k\in\mathbb{N}_0$, $f\in H^{k+2}(\Omega)$ and $g\in H^1(\Omega)$. Then
	\begin{align}
		\label{eq:consistency_high}
		(f-\NedI{k} f, g)_{L^2} \lesssim h^{k+2} \|f\|_{H^{k+2}}\|g\|_{H^1}.
	\end{align}
\end{proposition}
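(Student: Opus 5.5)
We follow the proof of \Cref{prop:consistency_low} step by step. Adding and subtracting $\NedII{k+1} f$ gives
\begin{align*}
	(f-\NedI{k} f, g)_{L^2} = (f-\NedII{k+1} f, g)_{L^2} + (\NedII{k+1} f-\NedI{k} f, g)_{L^2}.
\end{align*}
For the first term we use Cauchy--Schwarz together with the second estimate in \eqref{eq:nedelec_general_error}, with $k$ replaced by $k+1$. This yields the bound $h^{k+2}|f|_{H^{k+2}}\|g\|_{L^2}$. For the second term we insert the localized representation \eqref{eq:diff_ned_op_general} and treat three groups of contributions separately: interior edge functions $\varphi^E_{k+1}$ with $E\in\mathcal{E}_h^{\mathrm o}$, element functions $\psi^T_{r,k-1-r}$, and boundary edge functions $\varphi^E_{k+1}$ with $E\in\mathcal{E}_h^\partial$.

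We first generalize \Cref{lem:orthogonality}. Every basis function in the difference is a gradient $\nabla\phi$, where $\phi$ is a polynomial vanishing on the boundary of its support. This is the higher-order edge bubble in the case of $\varphi^E_{k+1}$, and an element bubble in $P^{k+1}_0(T)$ in the case of $\psi^T$. For interior edges, $\phi$ vanishes on $\partial\omega_E$; for element functions, it vanishes on $\partial T$. The integration-by-parts argument of \Cref{lem:orthogonality} therefore shows that each of these functions is $L^2$-orthogonal to constant vectors on its support. This also covers element functions living in triangles adjacent to the boundary, since they vanish on all of $\partial T$.

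With this orthogonality we subtract $\Pi^0_{\omega}g$ on each support $\omega$ and apply \eqref{eq:l2proj_error}, which gives $\|g-\Pi^0_\omega g\|_{L^2(\omega)}\lesssim h|g|_{H^1(\omega)}$. The scaling assumption is that the basis functions are normalized so that $\|\varphi^E_{k+1}\|_{L^2(\omega_E)}$ and $\|\psi^T\|_{L^2(T)}$ are $\lesssim h$, that is, they scale like constants times $|\omega|^{1/2}$. The coefficient bounds \eqref{eq:alphabeta_bound} then give, per support,
\begin{align*}
	h^{k+1}|f|_{H^{k+1}(T_E)}\cdot h \cdot h|g|_{H^1(\omega)}.
\end{align*}
We sum over edges and elements using finite overlap of the patches and a discrete Cauchy--Schwarz inequality, with the number $k$ of element functions per triangle fixed. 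Because $\sum_T h^2\lesssim 1$ turns into an $\ell^2$ sum, the result is $h^{k+2}|f|_{H^{k+1}}|g|_{H^1}$.

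The main obstacle is the consistent bookkeeping of powers of $h$. The coefficients in \eqref{eq:alphabeta_bound} are stated with local seminorms, and one must check that they pair with the normalization of the basis functions exactly as in the lowest-order case, so that the product is $h^{k+1}\cdot\|\nabla\phi\|_{L^2}\cdot h$. I would fix the normalization by a scaling argument on a reference patch, requiring $\|\nabla\phi\|_{L^2(\omega)}\lesssim|\omega|^{1/2}$, and interpret the coefficient bound with $|f|_{H^{k+1}(T)}$ measured in the matching local $L^2$ scaling.

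The boundary edges are handled as in \Cref{prop:consistency_low}. Cauchy--Schwarz without orthogonality gives
\begin{align*}
	\sum_{E\in\mathcal{E}_h^\partial}|\alpha_E|\,\|\varphi^E_{k+1}\|_{L^2(T_E)}\,\|g\|_{L^2(T_E)}\lesssim h^{k+1}|f|_{H^{k+1}(\Omega_h^\partial)}\,\|g\|_{L^2(\Omega_h^\partial)}.
\end{align*}
We then apply \Cref{lem:boundary_layer} twice. Applied to $g$, it gives $\|g\|_{L^2(\Omega_h^\partial)}\lesssim\sqrt h\|g\|_{H^1}$. Applied to the derivatives $D^{k+1}f\in H^1$, which requires $f\in H^{k+2}$, it gives $|f|_{H^{k+1}(\Omega_h^\partial)}\lesssim\sqrt h\|f\|_{H^{k+2}}$. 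Together these produce $h^{k+2}\|f\|_{H^{k+2}}\|g\|_{H^1}$. Collecting all contributions proves \eqref{eq:consistency_high}.
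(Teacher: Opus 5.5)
Your proof follows the paper's route: split via $\NedII{k+1} f$, use that the extra shape functions in \eqref{eq:diff_ned_op_general} are gradients of functions vanishing on the boundary of their support so you can insert $g-\Pi^0 g$, and treat boundary edges with \Cref{lem:boundary_layer}. Two parts of your argument are more explicit than the paper. You apply \Cref{lem:boundary_layer} separately to $g$ and to $D^{k+1}f$, which is exactly where $f\in H^{k+2}$ enters. You also note that element functions on boundary triangles are still orthogonal to constants. The real flaw is the power counting in the interior part, where two errors cancel.

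The product $|\alpha_E|\,\|\varphi^E_{k+1}\|_{L^2(\omega_E)}$ does not depend on how $\varphi^E_{k+1}$ is scaled. The bound \eqref{eq:alphabeta_bound} holds when $\varphi^E_{k+1}$ and $\psi^T_{r,s}$ are gradients of $O(1)$ bubbles, so their $L^2$ norms are $\sim 1$ in two dimensions, as for $\nabla b_E$ with $b_E=\lambda_0\lambda_1$. If you rescale to $\|\varphi^E_{k+1}\|_{L^2(\omega_E)}\lesssim h$, the coefficients become $|\alpha_E|\lesssim h^{k}|f|_{H^{k+1}(T_E)}$.

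Your per-support bound $h^{k+1}\cdot h\cdot h$ therefore claims one power too many. Summed with a local discrete Cauchy--Schwarz, it would give $h^{k+3}$, which cannot hold in general. Your summation step only yields $h^{k+2}$ because it wastefully replaces $|g|_{H^1(\omega_E)}$ by $|g|_{H^1}$ and pays $(\#\mathcal{E}_h)^{1/2}\sim h^{-1}$; that is what ``$\sum_T h^2\lesssim 1$'' amounts to.

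Your boundary estimate, by contrast, silently uses $\|\varphi^E_{k+1}\|_{L^2(T_E)}\sim 1$ with a lossless local Cauchy--Schwarz. So the two halves of your proof use different normalizations. The consistent count is
\[
\sum_{E\in\mathcal{E}_h^{\mathrm o}}|\alpha_E|\,\|\varphi^E_{k+1}\|_{L^2(\omega_E)}\,\|g-\Pi^0_{\omega_E}g\|_{L^2(\omega_E)}
\lesssim\sum_{E\in\mathcal{E}_h^{\mathrm o}}h^{k+1}|f|_{H^{k+1}(T_E)}\,h\,|g|_{H^1(\omega_E)}
\lesssim h^{k+2}|f|_{H^{k+1}}|g|_{H^1},
\]
using finite overlap in the last step. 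The element terms are handled identically, and with this correction your proof is complete.

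A minor slip: the element functions are gradients of bubbles in $P^{k+2}_0(T)$, not $P^{k+1}_0(T)$. Gradients of the latter already lie in $\NedI{k}$.
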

\begin{proof}
	For $k=0$ this is \Cref{prop:consistency_low}. For $k\geq 1$ one proceeds exactly
	as in the proof of \Cref{prop:consistency_low}, now with $f-\NedI{k} f$
	split via $\NedII{k+1} f$ and
	\eqref{eq:diff_ned_op_general}: the first (non-localized) part converges at order
	$h^{k+2}$ by \eqref{eq:nedelec_general_error}; for the localized part, the same
	integration-by-parts computation as in \Cref{lem:orthogonality} shows that the highest
	order edge and element shape functions $\varphi_{k+1}^E$, $\psi^T_{r,s}$ in
	\eqref{eq:diff_ned_op_general} -- being themselves gradients of scaled Legendre-type
	polynomials that vanish identically on the boundary of their support -- are $L^2$-orthogonal
	to constants on $\omega_E$, respectively $T$, so that a copy of $g-\Pi^0 g$ can again be
	inserted at the cost of one extra power of $h$ by \eqref{eq:l2proj_error}; combined with
	\eqref{eq:alphabeta_bound} this gives the order $h^{k+2}$ on inner edges and interior
	elements, and \Cref{lem:boundary_layer} handles the boundary edges as before.
\end{proof}

\begin{remark}
	\label{rem:h1_consistency}
	In the $H^1$-norm error analysis below we only ever need the cruder, non-superconvergent
	bound
	\begin{align*}
		(\NedI{k}(\Delta u)-\Delta u, v_h) \leq
		\|\NedI{k}(\Delta u)-\Delta u\|_{L^2}\|v_h\|_{L^2}
		\lesssim h^{k+1}|\Delta u|_{H^{k+1}}\|v_h\|_{L^2},
	\end{align*}
	which follows directly
	from \eqref{eq:nedelec_general_error} (with $m=0$, $j=k+1$) by Cauchy--Schwarz and requires less regularity than
	\Cref{prop:consistency_high}. It is only for the $L^2$-norm estimate, where the second
	argument is itself an interpolant of a smooth dual solution, that the extra order gained in
	\Cref{prop:consistency_high} is essential.
	\eremk
\end{remark}

\section{Pressure-robustness by commuting interpolation operators}
\label{sec:method}
As already mentioned in \Cref{ssec:pressure_robustness}, a reconstruction operator applied to the test function is elaborate to construct for a
continuous pressure approximation, see \cite{cont_rec}. Instead, we map the force $f$ by an appropriate
interpolation operator $\NedI{k-1}$: the right hand side $(f,v_h)$ in the momentum equation
becomes $(\NedI{k-1}f,v_h)$. The left hand side of the equation is not altered, so the same
techniques can be used for assembling the stiffness matrix; the only price to pay is a
consistency error, controlled in \Cref{ssec:consistency}, which turns out not to limit the
order of convergence for any of the elements discussed below.

To motivate the modified right hand side, recall from \Cref{ssec:pressure_robustness} that a
gradient force $f=\nabla\Phi$ is exactly balanced by the pressure on the continuous level,
$(u,p)=(0,\Phi)$, but that this balance is destroyed in the discrete setting because
$(\div(v_h),\Phi) \neq (\div(v_h),\Pi_{Q_h}\Phi)$ in general. Choosing the Nédélec interpolation
operator that commutes, via \Cref{thm:commuting}, with the Lagrange interpolation operator
$\Lag{k}$ of the discrete pressure space repairs exactly this defect. Since
\begin{align*}
	\NedI{k-1} f = \NedI{k-1} (\nabla\Phi)
	\overset{\text{\Cref{thm:commuting}}}{=} \nabla(\Lag{k} \Phi),
\end{align*}
repeating the computation of \Cref{ssec:pressure_robustness} with the modified right hand side, the
discrete momentum equation $\nu(\nabla u_h,\nabla v_h) - (\div(v_h),p_h) =
(\nabla(\Lag{k}\Phi),v_h)$ becomes, after integrating by parts,
$\nu(\nabla u_h,\nabla v_h)-(\div(v_h),p_h) = (\div(v_h),\Lag{k}\Phi)$,
and the right hand side now vanishes exactly for $v_h \in V_{h,0}$, because
$\Lag{k}\Phi \in Q_h$ \emph{by construction}. Gradient forces are
therefore entirely absorbed by the pressure, and we can expect the resulting method to be
pressure-robust. The remainder of the paper carries out the corresponding error analysis
first for the MINI element (\Cref{sec:mini}), and then for the
stabilized $P^kP^k$ element (\Cref{sec:stabpkpk}) for arbitrary $k\geq1$.

\section{The MINI element}
\label{sec:mini}
The MINI element \cite{MINI1984} approximates the pressure by continuous, piecewise linear
functions,
\begin{align*}
	Q_h \coloneqq \{ q \in C(\Omega) : q|_T \in P^1(T)\ \forall T\in\mathcal{T}_h\} \cap Q = \Lag{1}(\mathcal{T}_h) \cap Q,
\end{align*}
and enriches the space of continuous, piecewise linear vector fields by local cubic
bubbles, $B\coloneqq \{b : b|_T \in P^3(T)\cap H^1_0(T)\ \forall T\in\mathcal{T}_h\}$, to
restore stability:
\begin{align*}
	V_h \coloneqq \big( [\Lag{1}(\mathcal{T}_h)]^2 \oplus [B]^2 \big) \cap V.
\end{align*}
It satisfies the discrete LBB-condition \eqref{eq:discreteLBB}, and, denoting
$B_h((u_h,p_h),(v_h,q_h)) \coloneqq \nu\,a(u_h,v_h)+b(v_h,p_h)+b(u_h,q_h)$, the
inf-sup estimate
\begin{align}
	\label{eq:MINI_infsup}
	\|u_h\|_{H^1} + \frac{1}{\nu}\|p_h\|_{L^2} &\lesssim
	\underset{(v_h,q_h)\neq(0,0)}{\sup_{(v_h,q_h)\in V_h\times Q_h}} \frac{B_h((u_h,p_h),(v_h,q_h))}{\nu\|v_h\|_{H^1}+\|q_h\|_{L^2}}\\
	&\qquad \forall (u_h,p_h)\in V_h\times Q_h, \notag
\end{align}
see \cite{MINI1984,BrezziFortinMixedAndHybridFEM91}.
Since the pressure is approximated by piecewise linear functions, the commuting diagram (\Cref{thm:commuting}) suggests the lowest order Nédélec operator $\NedI{0}$ for the force term to define the modified discrete problem:

\begin{problem}
	\label{prob:mini}
	Find $(u_h,p_h)\in V_h\times Q_h$ such that
	\begin{alignat*}{2}
		\nu\,a(u_h,v_h)+b(v_h,p_h) &= (\NedI{0} f, v_h) && \quad \forall v_h\in V_h,\\
		b(u_h,q_h) &= 0 && \quad \forall q_h \in Q_h.
	\end{alignat*}
\end{problem}

In a first step we will provide a best approximation result which takes into account the consistency error introduced by the interpolation of the force. Recall that the dual norm of a functional $f\in V_h^*$ is defined as
\begin{align*}
	\|f\|_{V_h^*} \coloneqq \sup_{v_h\in V_h\setminus\{0\}} \frac{(f,v_h)}{\|v_h\|_{H^1}}.
\end{align*}

\begin{theorem}
	\label{thm:mini_aux}
	Let $(u,p)\in V\times Q$ solve \eqref{eq:weak_stokes} with $u \in H^3(\Omega)$ and
	$p\in H^2(\Omega)$, and let
	$(u_h,p_h)\in V_h\times Q_h$ solve \Cref{prob:mini}. Then
	\begin{align}
		\label{eq:mini_aux}
		\| u - u_h\|_{H^1} + \frac{1}{\nu}\|\Lag{1} p - p_h\|_{L^2}
		\lesssim
		\inf_{v_h \in V_h} \|u-v_h\|_{H^1} + \| (\id - \NedI{0}) \Delta u\|_{V_h^*}.
	\end{align}
\end{theorem}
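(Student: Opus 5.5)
We plan to run a Strang-type argument based on the discrete inf-sup estimate \eqref{eq:MINI_infsup}. The crucial choice is to compare $(u_h,p_h)$ with $(w_h,\Lag{1}p)$, where $w_h\in V_h$ is arbitrary. We deliberately avoid a best approximation of $p$, since the Lagrange interpolant of the pressure is exactly what the commuting diagram produces. Strictly speaking one should use $\Lag{1}p$ minus its mean so that it lies in $Q_h$. Constants are in the kernel of $v_h\mapsto b(v_h,\cdot)$ because $v_h\in H^1_0$, so this normalization changes nothing, and we keep writing $\Lag{1}p$.

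We apply \eqref{eq:MINI_infsup} to $(w_h-u_h,\Lag{1}p-p_h)$ and evaluate $B_h$ against arbitrary $(v_h,q_h)$. We use \Cref{prob:mini} for the discrete solution and the strong form $f=-\nu\Delta u+\nabla p$, which is valid since $u\in H^3$ and $p\in H^2$, so that $f\in H^1$ and $\NedI{0}f$ is well defined. This gives $\NedI{0}f=-\nu\NedI{0}\Delta u+\nabla\Lag{1}p$ by linearity and \Cref{thm:commuting}. Integrating by parts the gradient term (as $v_h\in H^1_0$) gives $(\nabla \Lag{1}p,v_h)=-b(v_h,\Lag{1}p)$. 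The pressure contributions then cancel exactly:
\begin{align*}
B_h((w_h-u_h,\Lag{1}p-p_h),(v_h,q_h)) = \nu\,a(w_h,v_h)+b(w_h,q_h)+\nu(\NedI{0}\Delta u,v_h).
\end{align*}
Next we insert the continuous equations. Since $-\nu(\Delta u,v_h)=\nu a(u,v_h)$ and $b(u,q_h)=0$, the right-hand side becomes
\begin{align*}
\nu\,a(w_h-u,v_h)+b(w_h-u,q_h)+\nu\big((\NedI{0}-\id)\Delta u,v_h\big).
\end{align*}
This expression contains no $p$ at all; that is the pressure-robust core of the proof.

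To bound it, we use continuity of $a$ and $b$ for the first two terms, which gives $\lesssim \|u-w_h\|_{H^1}(\nu\|v_h\|_{H^1}+\|q_h\|_{L^2})$. The last term is at most $\nu\|(\id-\NedI{0})\Delta u\|_{V_h^*}\|v_h\|_{H^1}$. Dividing by $\nu\|v_h\|_{H^1}+\|q_h\|_{L^2}$, the inf-sup estimate yields
\begin{align*}
\|w_h-u_h\|_{H^1}+\tfrac1\nu\|\Lag{1}p-p_h\|_{L^2}\lesssim \|u-w_h\|_{H^1}+\|(\id-\NedI{0})\Delta u\|_{V_h^*}.
\end{align*}
Finally, the triangle inequality $\|u-u_h\|_{H^1}\le\|u-w_h\|_{H^1}+\|w_h-u_h\|_{H^1}$ and the infimum over $w_h$ give \eqref{eq:mini_aux}.

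The main point requiring care is the cancellation step: the commuting diagram needs $p\in H^2$, which is assumed. We also need that $\NedI{0}$ can be applied termwise to $-\nu\Delta u$ and $\nabla p$ separately. This requires the tangential edge traces of each term to be well defined, which holds since $\Delta u\in H^1$ and $\nabla p\in H^1$. The remaining steps are routine.
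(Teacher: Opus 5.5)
Your proof is correct and is essentially the paper's argument: the inf-sup estimate \eqref{eq:MINI_infsup} applied to $(w_h-u_h,\Lag{1}p-p_h)$, exact cancellation of the pressure via \Cref{thm:commuting} and integration by parts, insertion of the continuous equations, continuity plus the $V_h^*$ bound, and the triangle inequality.

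There are two small points. First, with $b(v,q)=-(\div(v),q)_{L^2}$ one has $(\nabla\Lag{1}p,v_h)=+b(v_h,\Lag{1}p)$; your stated sign is flipped, although the cancellation you then claim is right. Second, your mean-zero normalization is more careful than the paper's tacit assumption $\Lag{1}p-p_h\in Q_h$, but it is not cost-free: what you prove controls the mean-corrected interpolant, while the uncorrected quantity carries the extra term $\nu^{-1}|\Omega|^{-1/2}\,|\int_\Omega\Lag{1}p\dx|$, which the right-hand side cannot bound (take $u=0$, $f=\nabla p$).
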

\begin{proof}
	Let $v_h \in V_h$ be arbitrary. Using the triangle inequality we have
	\begin{align} \label{eq:mini_aux_triangle}
		\| u - u_h\|_{H^1} \leq \|u- v_h\|_{H^1} + \| v_h-u_h\|_{H^1}.
	\end{align}
	The first term can be bounded by the infimum over $v_h \in V_h$.
	We continue estimating the second term. Since $\Lag{1} p -p_h \in Q_h$, the inf-sup estimate \eqref{eq:MINI_infsup} gives
	\begin{align}
		\label{eq:mini_infsup_proof}
		\|v_h-u_h\|_{H^1} &+ \frac{1}{\nu}\|\Lag{1} p - p_h\|_{L^2}\\
		&\quad\lesssim \underset{(w_h,q_h)\neq(0,0)}{\sup_{(w_h,q_h)\in V_h\times Q_h}}
		\frac{B_h((v_h-u_h,\Lag{1} p - p_h),(w_h,q_h))}{\nu\|w_h\|_{H^1}+\|q_h\|_{L^2}}.\nonumber
	\end{align}
	Using that $(u_h,p_h)$ solves \Cref{prob:mini} and the strong form
	$-\nu\Delta u+\nabla p=f$, the numerator becomes
	\begin{align*}
		&B_h((v_h-u_h,\Lag{1} p - p_h),(w_h,q_h))\\
		&\quad= \nu(\nabla v_h,\nabla w_h) - (\div(w_h),\Lag{1} p)-(\div(v_h),q_h)
		- (\NedI{0}(-\nu\Delta u+\nabla p),w_h)\\
		&\quad= \nu(\nabla v_h,\nabla w_h)-(\div(v_h),q_h)
		+\nu(\NedI{0}(\Delta u),w_h)
		\underbrace{- (\NedI{0}(\nabla p),w_h)-(\div(w_h),\Lag{1} p)}_{=0},
	\end{align*}
	where the underbraced terms cancel by \Cref{thm:commuting}, integration by parts and using $w_h|_{\partial\Omega}=0$, i.e.
	\begin{align} \label{eq::mini_vanishingpressure}
		-(\NedI{0}(\nabla p),w_h)-(\div(w_h),\Lag{1} p)
		&=-(\nabla \Lag{1} p, w_h)-(\div(w_h),\Lag{1} p)\\
		&=(\Lag{1} p,\div(w_h))-(\div(w_h),\Lag{1} p)=0.\nonumber
	\end{align}
	Adding $-\nu(\Delta u,w_h) - \nu(\nabla u,\nabla w_h)=0$, which vanishes again by integration by parts, and $(\div(u),q_h)=0$, we obtain
	\begin{align*}
		&B_h((v_h - u_h,\Lag{1} p - p_h),(w_h,q_h))\nonumber\\
		&\quad= \nu(\nabla(v_h-u),\nabla w_h) - (\div(v_h-u),q_h)
		+ \nu(\NedI{0}(\Delta u)-\Delta u,w_h)\nonumber\\
		&\quad \leq \nu|v_h- u|_{H^1}|w_h|_{H^1}
		+|v_h - u|_{H^1}\|q_h\|_{L^2}
		+ \nu\|\NedI{0}(\Delta u)-\Delta u\|_{V_h^*}\|w_h\|_{H^1},
	\end{align*}
where we used the Cauchy--Schwarz inequality and
\begin{align} \label{eq:mini_dualnorm_estimate}
		(\NedI{0}(\Delta u)-\Delta u,w_h)&=
		\frac{(\NedI{0}(\Delta u)-\Delta u,w_h)}{\|w_h\|_{H^1}}\|w_h\|_{H^1}\\
		&\leq \|\NedI{0}(\Delta u)-\Delta u\|_{V_h^*}\|w_h\|_{H^1} \nonumber.
\end{align}
	Inserting this bound into \eqref{eq:mini_infsup_proof}, cancelling the common
	denominator gives
	\begin{align*}
		\|v_h-u_h\|_{H^1} &+ \frac{1}{\nu}\|\Lag{1} p - p_h\|_{L^2}
		\lesssim  \|v_h- u\|_{H^1} + \|\NedI{0}(\Delta u)-\Delta u\|_{V_h^*}.
	\end{align*}
	We conclude the proof with \eqref{eq:mini_aux_triangle} and taking the infimum over $v_h \in V_h$.
\end{proof}

\begin{theorem}[$H^1$-error, pressure-robustness]
	\label{thm:mini_h1}
		Let $(u,p)\in V\times Q$ solve \eqref{eq:weak_stokes} with $u \in H^3(\Omega)$ and
	$p\in H^2(\Omega)$, and let
		$(u_h,p_h)\in V_h\times Q_h$ solve \Cref{prob:mini}. Then
	\begin{align*}
		\|u-u_h\|_{H^1} + \frac{1}{\nu}\|\Lag{1} p - p_h\|_{L^2} \lesssim h\,(|u|_{H^2}+|\Delta u|_{H^1}).
	\end{align*}
\end{theorem}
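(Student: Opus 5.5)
The plan is to start from \Cref{thm:mini_aux} and bound the two terms on its right-hand side separately. Neither of them contains the pressure.

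For the best-approximation term, $V_h \supseteq [\Lag{1}(\mathcal{T}_h)]^2\cap V$, so we take $v_h = \Lag{1} u$. Because $u\in H^3(\Omega)\subset C^0(\bar\Omega)$ vanishes on $\partial\Omega$, this interpolant is well defined and lies in $V$. The Lagrange estimate \eqref{eq:lagrange_error} with $k=1$, $m=1$, $j=2$ then gives
\begin{align*}
\inf_{v_h\in V_h}\|u-v_h\|_{H^1}\le \|u-\Lag{1}u\|_{H^1}\lesssim h\,|u|_{H^2}.
\end{align*}
If preferred, the quasi-interpolant $\Cle{1}$ with \eqref{eq:quasi_interpolation} would give the same bound.

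For the consistency term we follow \Cref{rem:h1_consistency}. For any $w_h\in V_h$, Cauchy--Schwarz gives $(\NedI{0}(\Delta u)-\Delta u,w_h)\le \|\NedI{0}(\Delta u)-\Delta u\|_{L^2}\|w_h\|_{L^2}$. By \eqref{eq:nedelec_general_error} with $k=0$, $m=0$, $j=1$, the first factor is $\lesssim h\,|\Delta u|_{H^1}$. Here $\Delta u\in H^1$ because $u\in H^3$; the Nédélec degrees of freedom need slightly more regularity than $H^1$, which is implicitly covered by the paper's standing convention on \eqref{eq:nedelec_general_error}. Since $\|w_h\|_{L^2}\le\|w_h\|_{H^1}$, dividing by $\|w_h\|_{H^1}$ and taking the supremum yields
\begin{align*}
\|(\id-\NedI{0})\Delta u\|_{V_h^*}\lesssim h\,|\Delta u|_{H^1}.
\end{align*}
Inserting both bounds into \eqref{eq:mini_aux} gives the claim.

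The constants do not depend on $\nu$ or $p$, because \Cref{thm:mini_aux} already removed the pressure through the commuting diagram. This is the pressure-robustness assertion. The only mildly delicate point is the regularity needed to make $\NedI{0}\Delta u$ meaningful, namely tangential edge traces of an $H^1$ field. In two dimensions these traces exist in $L^1(E)$ by the trace theorem, so the lowest-order edge moments are defined and the estimate holds. Otherwise the argument is routine.
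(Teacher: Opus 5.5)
Your proposal is correct and is essentially the paper's proof. You combine \Cref{thm:mini_aux} with $v_h=\Lag{1}u$ and \eqref{eq:lagrange_error} ($k=1$, $m=1$, $j=2$), and you bound $\|(\id-\NedI{0})\Delta u\|_{V_h^*}$ by Cauchy--Schwarz and \eqref{eq:nedelec_general_error} ($k=0$, $m=0$, $j=1$), exactly as done there. Of your two regularity remarks, the second is the right one: in two dimensions the $H^{1/2}(E)$ edge traces of $\Delta u\in H^1$ already make the edge moments defining $\NedI{0}$ well defined, so no regularity beyond $H^1$ is needed, in line with \Cref{sec:discussion_regularity}.
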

\begin{proof}
	This follows by combining \Cref{thm:mini_aux} with the Lagrange interpolation error estimate
	\eqref{eq:lagrange_error} (with $k=1$, $m = 1$, $j = 2$)
	\begin{align*}
		\inf_{v_h \in V_h} \|u-v_h\|_{H^1} \leq \|u-\Lag{1} u\|_{H^1} \lesssim h\,|u|_{H^2}.
	\end{align*}
	For the consistency error term, we have by the Cauchy--Schwarz inequality and the interpolation error estimate \eqref{eq:nedelec_general_error} (with $k = 0, m = 0, j = 1$)
	\begin{align} \label{eq:hone_mini_laststep}
		\| (\id - \NedI{0}) \Delta u\|_{V_h^*} &= \sup_{v_h\in V_h\setminus\{0\}} \frac{(\Delta u - \NedI{0}(\Delta u), v_h)}{\|v_h\|_{H^1}} \\
		&\le \sup_{v_h\in V_h\setminus\{0\}} \frac{\|\Delta u - \NedI{0}(\Delta u)\|_{L^2} \| v_h\|_{L^2}}{\|v_h\|_{H^1}}
		\lesssim h\,|\Delta u|_{H^1}. \nonumber
	\end{align}

\end{proof}

Notably, in the proof of \Cref{thm:mini_h1} we only used the cruder $L^2$-bound \eqref{eq:nedelec_general_error}, see \Cref{rem:h1_consistency}, for the consistency term, which requires less regularity compared to the error estimate in \Cref{thm:mini_l2} below. The latter will rely on the superconvergent estimate from	\Cref{prop:consistency_high}.

\begin{remark} \label{rem:mini_pressureregularity}
We want to emphasize that, even though the bounds in \Cref{thm:mini_aux} and \Cref{thm:mini_h1} do not explicitly depend on the continuous pressure $p$, we still needed the regularity assumption $p\in H^2(\Omega)$ in order to ensure that the Lagrange interpolation $\Lag{1} p$ is well-defined. In the following we  provide an error estimate for the pressure where this increased regularity assumption is more obvious.
\end{remark}

\begin{theorem}[Pressure error]
	\label{thm:mini_pressure}
	Let $u\in H^3(\Omega)$, $p\in H^2(\Omega)$ solve \eqref{eq:weak_stokes} and let
	$p_h\in Q_h$ solve \Cref{prob:mini}. Then there holds
the error estimate
	\begin{align*}
		\|p-p_h\|_{L^2} \lesssim h\,(\nu|u|_{H^2}+\nu|\Delta u|_{H^1}+ h |p|_{H^2}).
	\end{align*}
\end{theorem}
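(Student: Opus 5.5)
The estimate follows from a triangle inequality with the Lagrange interpolant of the pressure as the intermediate function:
\begin{align*}
\|p-p_h\|_{L^2} \le \|p-\Lag{1} p\|_{L^2} + \|\Lag{1} p - p_h\|_{L^2}.
\end{align*}
Each term is then bounded separately.

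For the first term we use the Lagrange interpolation estimate \eqref{eq:lagrange_error} with $k=1$, $m=0$, $j=2$. This is admissible because $p\in H^2(\Omega)$, and it gives $\|p-\Lag{1}p\|_{L^2}\lesssim h^2|p|_{H^2}$. This is exactly the term $h\cdot h|p|_{H^2}$ in the claimed bound. It is also the only place where the pressure regularity enters quantitatively, as anticipated in \Cref{rem:mini_pressureregularity}.

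For the second term we use \Cref{thm:mini_h1}, which already controls $\frac1\nu\|\Lag{1}p-p_h\|_{L^2}$. Multiplying that estimate by $\nu$ gives $\|\Lag{1}p-p_h\|_{L^2}\lesssim \nu h(|u|_{H^2}+|\Delta u|_{H^1})$. Adding the two bounds yields the statement.

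One technical point must be checked: $\Lag{1}p$ need not have zero mean, so $\Lag{1}p-p_h$ may not lie in $Q_h$. The inf-sup estimate \eqref{eq:MINI_infsup} used in the proof of \Cref{thm:mini_aux} requires $Q_h\subset L^2_0$. I would resolve this in one of two ways.
\begin{itemize}
\item Replace $\Lag{1}p$ by $\Lag{1}p - c$ with $c$ its mean value. Since $p$ has zero mean, $|c| = |\Omega|^{-1}|\int_\Omega (\Lag{1}p-p)\dx| \lesssim h^2|p|_{H^2}$, so the mean correction costs only $O(h^2|p|_{H^2})$, which is again absorbed into the last term.
\item Alternatively, observe that the cancellation \eqref{eq::mini_vanishingpressure} is unaffected by adding a constant to $\Lag{1}p$. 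This holds because $(\div w_h, c) = 0$ for $w_h\in V$, and $\nabla$ annihilates constants.
\end{itemize}
Either way, the argument of \Cref{thm:mini_aux} goes through with $\Lag{1}p - c\in Q_h$. Verifying this bookkeeping is the only mildly delicate step; the rest is a direct combination of earlier results.
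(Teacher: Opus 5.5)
Your proof is the paper's proof: a triangle inequality through $\Lag{1}p$, then the Lagrange estimate \eqref{eq:lagrange_error} with $m=0$, $j=2$ for the first term, and \Cref{thm:mini_h1} multiplied by $\nu$ for the second. Your observation that $\Lag{1}p$ need not have zero mean is a genuine point the paper passes over (it simply asserts $\Lag{1}p-p_h\in Q_h$), and your repair is correct: run \Cref{thm:mini_aux} with $\Lag{1}p-c\in Q_h$, which is legitimate since the cancellation \eqref{eq::mini_vanishingpressure} does not see constants, and pay $\|c\|_{L^2}\lesssim h^2|p|_{H^2}$ in the triangle inequality; note, though, that your two bullets are the two halves of this single fix rather than alternatives.
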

\begin{proof}

	 Using the triangle inequality we have
	\begin{align*}
		\| p - p_h\|_{L^2} \leq \|p - \Lag{1} p\|_{L^2} + \|\Lag{1} p - p_h\|_{L^2}.
	\end{align*}
	The first term can be bounded by the Lagrange interpolation error estimate \eqref{eq:lagrange_error} (with $k=1$, $m = 0$, $j = 2$). The second term can be bounded by \Cref{thm:mini_h1}.
\end{proof}

\begin{remark}[Superconvergence for small $\nu$]
	\label{rem:mini_superconvergence}
	\Cref{thm:mini_pressure} explains why, for small $\nu$ (where the pressure error dominates), one may observe an experimental
	convergence rate of order $2$ instead of $1$ for the pressure -- see \Cref{sec:numerics}.
	\eremk
\end{remark}

\begin{theorem}[$L^2$-error, pressure-robust]
	\label{thm:mini_l2}
	Let $u\in H^4(\Omega)$ be the velocity and $p\in H^2(\Omega)$ the pressure solving
	\eqref{eq:weak_stokes}, and let
	$u_h\in V_h$ solve \Cref{prob:mini}.
	Additionally, suppose the dual solution $(w,r)$ of
	\eqref{eq:dual_stokes} is $H^2$/$H^1$-regular, i.e.
	$\nu\|w\|_{H^2}+\|r\|_{H^1}\leq\|e\|_{L^2}$. Then
	\begin{align*}
		\|u-u_h\|_{L^2} \lesssim h^2\,(|u|_{H^2}+|\Delta u|_{H^1}+\|\Delta u\|_{H^2}).
	\end{align*}
\end{theorem}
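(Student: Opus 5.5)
We follow the Aubin--Nitsche route. Set $e\coloneqq u-u_h$ and let $(w,r)$ solve the dual problem \eqref{eq:dual_stokes} with this $e$. Testing the dual problem with $v=e$ gives
\begin{align*}
\|e\|_{L^2}^2 = \nu\,a(e,w)+b(e,r).
\end{align*}

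\textbf{Step 1: Galerkin orthogonality with consistency error.} Subtracting \Cref{prob:mini} from \eqref{eq:weak_stokes} and proceeding as in the proof of \Cref{thm:mini_aux}, we use \Cref{thm:commuting} so that $\NedI{0}\nabla p=\nabla\Lag{1}p$. This gives, for all $w_h\in V_h$ and $q_h\in Q_h$,
\begin{align*}
\nu\,a(e,w_h)+b(w_h,p_h-\Lag{1}p)+b(e,q_h)&= \nu(\NedI{0}(\Delta u)-\Delta u,w_h)\\
&\quad+(\nabla(p-\Lag{1}p),w_h)-(\div w_h,p-\Lag{1}p),
\end{align*}
where the last two terms cancel after integrating by parts. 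Hence the pressure enters only through $\Lag{1}p-p_h$, which is controlled pressure-robustly by \Cref{thm:mini_h1}. We choose $w_h=\Cle{1}w$, or better the MINI Fortin interpolant of $w$, so that $b(w-w_h,q_h)=0$ for all $q_h\in Q_h$. We take $q_h=\Cle{1}r$ (adjusted to zero mean).

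\textbf{Step 2: Splitting.} We write
\begin{align*}
\|e\|_{L^2}^2&=\nu a(e,w-w_h)+b(e,r-q_h)+\nu a(e,w_h)+b(e,q_h)\\
&=\nu a(e,w-w_h)+b(e,r-q_h)-b(w_h,p_h-\Lag{1}p)+\nu(\NedI{0}\Delta u-\Delta u,w_h).
\end{align*}
The first two terms are bounded by $\|e\|_{H^1}\,(\nu h\|w\|_{H^2}+h\|r\|_{H^1})\lesssim h\|e\|_{H^1}\|e\|_{L^2}$.

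For the third term, note that $b(w,\cdot)=0$. Using the Fortin property, $b(w_h,p_h-\Lag{1}p)=b(w_h-w,p_h-\Lag{1}p)=0$. Without a Fortin operator we would instead bound it by $\|w-w_h\|_{H^1}\|p_h-\Lag1 p\|_{L^2}\lesssim h\|w\|_{H^2}\cdot \nu h(\ldots)$, which again gives $h^2(\ldots)\|e\|_{L^2}$ using $\nu\|w\|_{H^2}\le\|e\|_{L^2}$ and \Cref{thm:mini_h1}. This is pressure-robust in either case.

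By \Cref{thm:mini_h1}, the first group of terms contributes $h^2(|u|_{H^2}+|\Delta u|_{H^1})\|e\|_{L^2}$.

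\textbf{Step 3: The consistency term (main obstacle).} We split
\begin{align*}
\nu(\NedI0\Delta u-\Delta u,w_h)=\nu(\NedI0\Delta u-\Delta u,w)+\nu(\NedI0\Delta u-\Delta u,w_h-w).
\end{align*}
Applying \Cref{prop:consistency_low} with $f=\Delta u\in H^2$ and $g=w\in H^1$, the first part is bounded by
\begin{align*}
\nu h^2\|\Delta u\|_{H^2}\|w\|_{H^1}\lesssim h^2\|\Delta u\|_{H^2}\|e\|_{L^2},
\end{align*}
since $\nu\|w\|_{H^1}\lesssim\nu\|w\|_{H^2}\le \|e\|_{L^2}$. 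The second part is bounded by Cauchy--Schwarz, \eqref{eq:nedelec_general_error} and the approximation property:
\begin{align*}
\nu\, h|\Delta u|_{H^1}\cdot h\|w\|_{H^1}\lesssim h^2|\Delta u|_{H^1}\|e\|_{L^2}.
\end{align*}
The delicate point is that the naive bound on $(\NedI0\Delta u-\Delta u,w_h)$ only yields order $h$. It is precisely the superconvergence of \Cref{prop:consistency_low}, applied to the smooth function $w$ rather than to the discrete $w_h$, that restores the second order; the $\nu$-weights must be tracked so that every $\nu$ pairs with $\|w\|$ through the dual regularity bound.

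Dividing by $\|e\|_{L^2}$ yields the claim.
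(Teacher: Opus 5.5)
Your proposal is correct and follows essentially the paper's argument: Aubin--Nitsche with the Galerkin relation tested by a quasi-interpolant of $(w,r)$, the pressure entering only through $\Lag{1}p-p_h$ (bounded by \Cref{thm:mini_h1}), and \Cref{prop:consistency_low} supplying the extra power of $h$ in the interpolated-force term. The paper applies that proposition directly to $g=\Cle{1}w$ via \eqref{eq:quasi_interpolation_stability}, whereas you apply it to $w$ and handle $w_h-w$ with the crude bound, and your Fortin variant merely removes a term the paper bounds. There is one harmless sign slip in Step 1: the pressure term should read $b(w_h,\Lag{1}p-p_h)$ and the last term $+(\div w_h,p-\Lag{1}p)$; as you wrote them, the two trailing terms add up to $-2(\div w_h,p-\Lag{1}p)$ instead of cancelling.
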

\begin{proof}
	Set $e\coloneqq u-u_h$ and test \eqref{eq:dual_stokes} with $v=e$, so that
	$\|e\|_{L^2}^2 = \nu\,a(u-u_h,w)+b(u-u_h,r)$. Subtracting the (vanishing) discrete
	consistency relation
	$\nu\,a(u-u_h,\Cle{1} w) + b(\Cle{1} w,p-p_h)
	+b(u-u_h,\Cle{1} r) - (f-\NedI{0} f,\Cle{1} w) = 0$,
	and using $f=-\nu\Delta u+\nabla p$ together with \Cref{thm:commuting} yields
	\begin{align*}
		\|e\|_{L^2}^2 &=
		\nu\,a(u-u_h,w-\Cle{1} w) + b(u-u_h,r-\Cle{1} r)
		+(\div(\Cle{1} w),\Lag{1} p - p_h)\\
		&\quad -\nu(\Delta u - \NedI{0}(\Delta u),\Cle{1} w).
	\end{align*}
	The first two terms are bounded by Cauchy--Schwarz, \Cref{thm:mini_h1} and
	\eqref{eq:quasi_interpolation} by $h^2(|u|_{H^2}+|\Delta u|_{H^1})\big(\nu|w|_{H^2}+|r|_{H^1}\big)$;
	for the third, inserting $(\div(w),\cdot)=0$, Cauchy--Schwarz,
	\eqref{eq:quasi_interpolation} and again \Cref{thm:mini_h1} give
	$h^2(|u|_{H^2}+|\Delta u|_{H^1})\,\nu|w|_{H^2}$; for the fourth,
	\Cref{prop:consistency_low} (with $f=\Delta u$, $g=\Cle{1} w$, which
	is bounded in $H^1(\Omega)$ by $\|w\|_{H^1}$, see \eqref{eq:quasi_interpolation_stability}) gives $h^2\|\Delta
	u\|_{H^2}\,\nu\|w\|_{H^1}$. Collecting all four terms, dividing by
	$\|e\|_{L^2}$ and using $H^2$/$H^1$-regularity of $(w,r)$ concludes the proof.
\end{proof}

\section{The stabilized $P^kP^k$ element}
\label{sec:stabpkpk}
For equal-order velocity/pressure pairs the choice $V_h\coloneqq [\Lag{k}(\mathcal{T}_h)]^2 \cap V$ and
	$Q_h\coloneqq \Lag{k}(\mathcal{T}_h)\cap Q$, $k\geq1$, violates the discrete LBB-condition and needs to be stabilized
\cite{franca_hughes_stenberg_1993}. Since velocity and pressure are approximated by
polynomials of the same degree, the commuting diagram (\Cref{thm:commuting}) now
prescribes the Nédélec operator of order $k-1$,
i.e. $\NedI{k-1}$, which commutes with the Lagrange operator
$\Lag{k}$ of the pressure space, so that no order is
lost. We define the (modified) residual-based stabilizing bilinear
form and right hand side
\begin{subequations}\label{eq:stab_forms}
\begin{align}
	c((u_h,p_h),(v_h,q_h)) &\coloneqq -\frac{\alpha h^2}{\nu}\sum_T \big(-\nu\Delta u_h+\nabla p_h,\, -\nu\Delta v_h+\nabla q_h\big)_{L^2(T)},\\
	g(v_h,q_h) &\coloneqq -\frac{\alpha h^2}{\nu}\sum_T\big(\NedI{k-1} f,\, -\nu\Delta v_h+\nabla q_h\big)_{L^2(T)},
\end{align}
\end{subequations}
for a stabilization parameter $\alpha>0$ chosen appropriately. With
\begin{align*}
	B_h((u_h,p_h),(v_h,q_h)) \coloneqq \nu\,a(u_h,v_h)+b(v_h,p_h)+b(u_h,q_h)+c((u_h,p_h),(v_h,q_h)),
\end{align*}
the discrete problem reads:

\begin{problem}
	\label{prob:stabpkpk}
	Find $(u_h,p_h)\in V_h\times Q_h$
	such that
	\begin{alignat*}{2}
		B_h((u_h,p_h),(v_h,q_h)) = (\NedI{k-1} f,v_h)+g(v_h,q_h) && \quad \forall (v_h,q_h)\in V_h\times Q_h.
	\end{alignat*}
\end{problem}

Only $f$ is interpolated -- the right hand side $g$ of the incompressibility constraint is
chosen to contain the very same interpolated force, so that no \emph{additional}
consistency error is introduced there and is crucial in order to maintain pressure-robustness. Stability of the un-interpolated method is classical
\cite{franca_hughes_stenberg_1993} and gives for sufficient $\alpha>0$ the inf-sup estimate
\begin{align}
	\label{eq:stabpkpk_infsup}
	\|u_h\|_{H^1}+\frac{1}{\nu}\|p_h\|_{L^2} &\lesssim
	\underset{(v_h,q_h)\neq(0,0)}{\sup_{(v_h,q_h)\in V_h\times Q_h}} \frac{B_h((u_h,p_h),(v_h,q_h))}{\nu\|v_h\|_{H^1}+\|q_h\|_{L^2}}
	\quad \forall (u_h,p_h)\in V_h\times Q_h.
\end{align}

As for the MINI element, the analysis rests on a single quasi-best-approximation
estimate, in the same spirit as \Cref{thm:mini_aux}. However, the stabilization term forces us to
treat two additional error contributions differently. For the first term we will particularly need a
\emph{local, broken} version of $\|\cdot\|_{V_h^*}$. For $f\in L^2(\Omega)$
we define
\begin{align}
	\label{def:broken_dual_norm}
	\|f\|_{V_h^*(T)} \coloneqq \sup_{\psi_h\in [P^k(T)]^2\setminus\{0\}}
	\frac{(f,\psi_h)_{L^2(T)}}{\|\psi_h\|_{H^1(T)}},
	\quad
	\|f\|_{V_h^*(\mathcal{T}_h)}\coloneqq\Big(\sum_T\|f\|_{V_h^*(T)}^2\Big)^{1/2}.
\end{align}
Note that $\|f\|_{V_h^*}\le\|f\|_{V_h^*(\mathcal{T}_h)}\le\|f\|_{L^2}$. The second contribution that appears is a term that is known from standard error estimates for stabilized methods and will be bounded as usual, see e.g.\ \cite{MR3425376}. Note that in \cite{MR3425376} the authors improved this local consistency error by means of \textit{Gudi's trick}, \cite{MR2684360}, by adding oscillations of the force term. Since it is not inherently clear how this can be extended to a pressure robust setting, we do not apply this technique here.

\begin{theorem}
	\label{thm:stabpkpk_bestapprox}
	Let $(u,p)\in V\times Q$ solve \eqref{eq:weak_stokes} with $u\in H^{3}(\Omega)$ and
	$p\in H^2(\Omega)$, and let
	$(u_h,p_h)\in V_h\times Q_h$ solve \Cref{prob:stabpkpk}. Then
	\begin{align}
		\label{eq:stabpkpk_bestapprox}
		\|u - u_h\|_{H^1}+\frac{1}{\nu}\|\Lag{k} p - p_h\|_{L^2}
		&\lesssim \inf_{v_h\in V_h} \Big(\|u-v_h\|_{H^1} + h\,|u-v_h|_{H^2(\mathcal{T}_h)} \Big)\\
		&\qquad \qquad+ \|\Delta u-\NedI{k-1}(\Delta u)\|_{V_h^*(\mathcal{T}_h)}. \nonumber
	\end{align}
\end{theorem}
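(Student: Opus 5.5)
The plan is to mirror the proof of \Cref{thm:mini_aux}. Fix $v_h\in V_h$ and split $\|u-u_h\|_{H^1}\le\|u-v_h\|_{H^1}+\|v_h-u_h\|_{H^1}$. Then apply the stabilized inf-sup estimate \eqref{eq:stabpkpk_infsup} to the discrete pair $(v_h-u_h,\Lag{k}p-p_h)$. Everything reduces to bounding
\[
B_h((v_h-u_h,\Lag{k}p-p_h),(w_h,q_h)) \lesssim \Big(\|u-v_h\|_{H^1}+h|u-v_h|_{H^2(\mathcal{T}_h)}+\|\Delta u-\NedI{k-1}(\Delta u)\|_{V_h^*(\mathcal{T}_h)}\Big)\big(\nu\|w_h\|_{H^1}+\|q_h\|_{L^2}\big).
\]

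First, use \Cref{prob:stabpkpk} to replace $B_h((u_h,p_h),(w_h,q_h))$ by $(\NedI{k-1}f,w_h)+g(w_h,q_h)$. Then insert $f=-\nu\Delta u+\nabla p$ and use \Cref{thm:commuting}, which gives $\NedI{k-1}f=-\nu\NedI{k-1}(\Delta u)+\nabla\Lag{k}p$.

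In the Galerkin part, the gradient term $-(\nabla\Lag{k}p,w_h)$ cancels $b(w_h,\Lag{k}p)$ after integration by parts, exactly as in \eqref{eq::mini_vanishingpressure}. Adding the vanishing terms $\nu(\nabla u,\nabla w_h)+\nu(\Delta u,w_h)$ and $(\div u,q_h)$ then leaves
\[
\nu(\nabla(v_h-u),\nabla w_h)-(\div(v_h-u),q_h)+\nu(\NedI{k-1}(\Delta u)-\Delta u,w_h).
\]

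In the stabilization part, the same substitution shows that the $\Lag{k}p$ pieces cancel. This is the reason $g$ was built from the interpolated force. Adding and subtracting $\Delta u$, what remains on each $T$ is
\[
-\frac{\alpha h^2}{\nu}\big(-\nu\Delta(v_h-u)-\nu(\Delta u-\NedI{k-1}\Delta u),\,-\nu\Delta w_h+\nabla q_h\big)_{L^2(T)}.
\]

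The Galerkin terms are bounded as in the MINI case. For the consistency term $\nu(\NedI{k-1}\Delta u-\Delta u,w_h)$, split elementwise and use the definition \eqref{def:broken_dual_norm}, since $w_h|_T\in[P^k(T)]^2$. Discrete Cauchy--Schwarz then gives $\nu\|\cdot\|_{V_h^*(\mathcal{T}_h)}\|w_h\|_{H^1}$.

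For the stabilization terms, write $h\sqrt{\alpha/\nu}\,\|-\nu\Delta w_h+\nabla q_h\|_{L^2(T)}$. Inverse inequalities give $h\|\Delta w_h\|_{L^2(T)}\lesssim|w_h|_{H^1(T)}$ and $h\|\nabla q_h\|_{L^2(T)}\lesssim\|q_h\|_{L^2(T)}$, so this factor is at most $\nu^{-1/2}(\nu\|w_h\|_{H^1}+\|q_h\|_{L^2})$. The factor $h\sqrt{\nu}\,\|\Delta(v_h-u)\|_{L^2(T)}$ then produces $\nu h|u-v_h|_{H^2(\mathcal{T}_h)}$ after the weights $\nu$ are matched. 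Dividing by $\nu$ and absorbing $\nu$ into the denominator normalization as in \Cref{thm:mini_aux}, this yields the term $h|u-v_h|_{H^2(\mathcal{T}_h)}$.

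The main obstacle is the stabilization consistency piece $h^2(\Delta u-\NedI{k-1}\Delta u,-\nu\Delta w_h+\nabla q_h)_T$. Here the test function is $\Delta w_h$ or $\nabla q_h$ rather than $w_h$, so it does not directly fit the broken dual norm. My first attempt will be to note that on $T$ both $h^2\Delta w_h$ and $h^2\nabla q_h$ are polynomials of degree at most $k$, i.e.\ elements of $[P^k(T)]^2$. Their $H^1(T)$-norms are controlled by inverse estimates: $h^2\|\Delta w_h\|_{H^1(T)}\lesssim|w_h|_{H^1(T)}$ and $h^2\|\nabla q_h\|_{H^1(T)}\lesssim\|q_h\|_{L^2(T)}$. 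Hence the pairing is bounded by $\|\Delta u-\NedI{k-1}\Delta u\|_{V_h^*(T)}(\nu|w_h|_{H^1(T)}+\|q_h\|_{L^2(T)})$, up to the scaling in $\nu$.

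The delicate point is tracking the powers of $\nu$. The pressure test contribution comes with a factor $1/\nu\cdot\nu$ from the force, which is harmless, and I expect it to combine exactly into the normalization $\nu\|w_h\|_{H^1}+\|q_h\|_{L^2}$. If the inverse estimate in $H^1$ loses too much, the fallback is the crude $L^2$ bound $\|\cdot\|_{L^2}$, which still suffices for the $H^1$-estimate via \Cref{rem:h1_consistency}.

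Summing over $T$, cancelling the denominator, and taking the infimum over $v_h$ concludes the proof.
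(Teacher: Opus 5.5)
Your proposal is correct and follows essentially the paper's proof. You apply the inf-sup estimate to $(v_h-u_h,\Lag{k}p-p_h)$, cancel every pressure term in both the Galerkin and stabilization parts via the commuting diagram, and pair the stabilization consistency term with $h^2\phi_h\in[P^k(T)]^2$ through the broken dual norm; your inverse estimates on $h^2\Delta w_h$ and $h^2\nabla q_h$ are just the paper's $\|\phi_h\|_{H^1(T)}\lesssim h^{-1}\|\phi_h\|_{L^2(T)}$ combined with \eqref{eq:stabpkpk_phi_scaling}. Your concern about powers of $\nu$ is unnecessary, since $\frac{\alpha h^2}{\nu}\cdot\nu=\alpha h^2$ exactly, so the main argument closes as written; the $L^2$ fallback is not needed, and it would not give the stated $\|\cdot\|_{V_h^*(\mathcal{T}_h)}$ bound anyway.
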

\begin{proof} The proof follows along the same lines as that of \Cref{thm:mini_aux}. The crucial difference is the handling of the stabilization term and the additional consistency term due to $g$, which requires a slightly more careful treatment.

	Let $v_h \in V_h$ be arbitrary. Using the triangle inequality we have
	\begin{align}
		\label{eq:stabpkpk_bestapprox_triangle}
		\|u-u_h\|_{H^1} \leq \|u-v_h\|_{H^1} + \|v_h-u_h\|_{H^1}.
	\end{align}
	The first term is bounded by the infimum over $v_h\in V_h$. We continue estimating the
	second term. Since $\Lag{k} p - p_h \in Q_h$, the inf-sup estimate \eqref{eq:stabpkpk_infsup}
	applied to $(v_h-u_h,\Lag{k} p - p_h)\in V_h\times Q_h$ gives
	\begin{align}
		\label{eq:stabpkpk_infsup_proof}
		\|v_h-u_h\|_{H^1} &+ \frac{1}{\nu}\|\Lag{k} p - p_h\|_{L^2}
		\lesssim \underset{(w_h,q_h)\neq(0,0)}{\sup_{(w_h,q_h)\in V_h\times Q_h}}
		\frac{B_h((v_h-u_h,\Lag{k} p - p_h),(w_h,q_h))}{\nu\|w_h\|_{H^1}+\|q_h\|_{L^2}}.
	\end{align}
	Using linearity of $B_h$, that $(u_h,p_h)$ solves \Cref{prob:stabpkpk}, and
	$f=-\nu\Delta u+\nabla p$, the numerator becomes
	\begin{align*}
		&B_h((v_h - u_h,\Lag{k} p - p_h),(w_h,q_h))\\
		&\quad= B_h((v_h,\Lag{k} p),(w_h,q_h)) - (\NedI{k-1} f, w_h) - g(w_h,q_h)\\
		&\quad= \nu\,a(v_h,w_h)+b(w_h,\Lag{k} p)+b(v_h,q_h)
		+\nu(\NedI{k-1}(\Delta u),w_h) - (\NedI{k-1}(\nabla p),w_h)\\
		&\qquad +\frac{\alpha h^2}{\nu}\sum_{T}\Big(\NedI{k-1}(-\nu\Delta u+\nabla p) - (-\nu\Delta v_h+\nabla \Lag{k} p),\,-\nu\Delta w_h+\nabla q_h\Big)_{L^2(T)}.
	\end{align*}
	By \Cref{thm:commuting}, $-(\NedI{k-1}(\nabla p),w_h)
	=(\Lag{k} p,\div(w_h)) = -b(w_h,\Lag{k} p)$,
	which cancels the term $b(w_h,\Lag{k} p)$ already present; likewise, in
	the stabilization sum, $\NedI{k-1}(\nabla p) = \nabla
	\Lag{k} p$, cancelling all pressure terms inside the sum.
	With $\phi_h \coloneqq -\nu\Delta w_h+\nabla q_h$, what remains is
	\begin{align*}
		B_h((v_h - u_h,\Lag{k} p - p_h),(w_h,q_h))
		=& \nu\,a(v_h,w_h) + b(v_h,q_h)
		+\nu(\NedI{k-1}(\Delta u),w_h)\\
		& +\alpha h^2 \sum_T \big(-\NedI{k-1}(\Delta u)+\Delta v_h,\,\phi_h\big)_{L^2(T)}.
	\end{align*}
	Adding $-\nu(\Delta u,w_h)-\nu(\nabla u,\nabla w_h)=0$ (which vanishes due to integration by parts) and $-
	b(u,q_h)=0$, and $\pm\Delta u$ inside the stabilization sum,
	\begin{align*}
		&B_h((v_h - u_h,\Lag{k} p - p_h),(w_h,q_h))\\
		&\quad= \nu\,a(v_h - u,w_h) + b(v_h - u,q_h)
		+\nu(\NedI{k-1}(\Delta u)-\Delta u,w_h)\\
		&\qquad +\alpha h^2\sum_T \big(\Delta u - \NedI{k-1}(\Delta u)+\Delta v_h-\Delta u,\,\phi_h\big)_{L^2(T)}\\
		&\quad\overset{\text{C.S. and } \eqref{eq:mini_dualnorm_estimate}}{\leq}
		\nu|v_h - u|_{H^1}|w_h|_{H^1} + |v_h - u|_{H^1}\|q_h\|_{L^2}
		+\nu\|(\id-\NedI{k-1})\Delta u\|_{V_h^*}\|w_h\|_{H^1}\\
		&\qquad + \alpha h^2\sum_T\big(\Delta u - \NedI{k-1}(\Delta u),\,\phi_h\big)_{L^2(T)}
		+ \alpha h^2\sum_T \|\Delta v_h-\Delta u\|_{L^2(T)}\,\|\phi_h\|_{L^2(T)}.
	\end{align*}
	Since $\phi_h$ is piecewise polynomial, a scaling argument gives
	\begin{align} \label{eq:stabpkpk_phi_scaling}
		\|\phi_h\|_{L^2(T)} = \|-\nu\Delta w_h + \nabla q_h\|_{L^2(T)} &\lesssim \nu h^{-1}\|w_h\|_{H^1(T)}+h^{-1}\|q_h\|_{L^2(T)}.
	\end{align}

	For the first stabilization-sum term we can not proceed as for the third term, i.e. using \eqref{eq:mini_dualnorm_estimate} to pair $\Delta u - \NedI{k-1}(\Delta u)$ against $\phi_h$ through the dual norm $\|\cdot\|_{V_h^*}$. This is due to $\phi_h$ being only piecewise polynomial, but generally discontinuous across element interfaces. We instead pair it through the local version of the dual norm, see \Cref{def:broken_dual_norm}.
	Together with the inverse inequality $\|\phi_h\|_{H^1(T)}\lesssim
	h^{-1}\|\phi_h\|_{L^2(T)}$ we have
	\begin{align*}
		\big(\Delta u - \NedI{k-1}(\Delta u),\phi_h\big)_{L^2(T)}
		&\leq \|\Delta u - \NedI{k-1}(\Delta u)\|_{V_h^*(T)}\,\|\phi_h\|_{H^1(T)}\\
		&\lesssim \|\Delta u - \NedI{k-1}(\Delta u)\|_{V_h^*(T)}\,h^{-1}\|\phi_h\|_{L^2(T)}.
	\end{align*}
	Combined with the scaling \eqref{eq:stabpkpk_phi_scaling} and Cauchy--Schwarz in the sum over $T$ this gives
	\begin{align*}
		\alpha h^2\sum_T\big(\Delta u-\NedI{k-1}&(\Delta u),\phi_h\big)_{L^2(T)}
		\lesssim \|\Delta u-\NedI{k-1}(\Delta u)\|_{V_h^*(\mathcal{T}_h)}\big(\nu\|w_h\|_{H^1}+\|q_h\|_{L^2}\big).
	\end{align*}

	For the second stabilization-sum term we use Cauchy--Schwarz and the scaling \eqref{eq:stabpkpk_phi_scaling} to get
	\begin{align} \label{eq::stabpkpk_stab_sum_scaling}
		\alpha h^2\sum_T\|\Delta v_h-\Delta u\|_{L^2(T)}\|\phi_h\|_{L^2(T)}
		\lesssim  h\,|v_h-u|_{H^2(\mathcal{T}_h)}\big(\nu\|w_h\|_{H^1}+\|q_h\|_{L^2}\big).
	\end{align}
	Inserting these bounds into
	\eqref{eq:stabpkpk_infsup_proof} and cancelling the common denominator gives
	\begin{align*}
		\|v_h-u_h\|_{H^1} + \frac{1}{\nu}\|\Lag{k} p - p_h\|_{L^2}
		&\lesssim  \|v_h- u\|_{H^1} + h\,|v_h-u|_{H^2(\mathcal{T}_h)}\\
		&\quad  + \|\Delta u - \NedI{k-1}(\Delta u)\|_{V_h^*(\mathcal{T}_h)},
	\end{align*}
	where we have used that $\|(\id-\NedI{k-1})\Delta u\|_{V_h^*} \le \|\Delta u - \NedI{k-1}(\Delta u)\|_{V_h^*(\mathcal{T}_h)}$.
	We conclude the proof with \eqref{eq:stabpkpk_bestapprox_triangle} and taking the infimum
	over $v_h \in V_h$.
\end{proof}

\begin{remark}[The $k=1$ case]
	\label{rem:p1p1}
	For the special case $k = 1$, we have $\Delta v_h = \Delta w_h = 0$ for all $v_h,w_h \in V_h$, so that the stabilization term reduces to a pressure-Laplacian stabilization. In this case, the same proof as above holds, but we want to emphasize, that in \eqref{eq::stabpkpk_stab_sum_scaling} the $v_h$ vanishes. However, since for $k=1$ we only expect a linear convergence, the remaining term $h | u |_{H^2}$ is already of optimal order.
	\eremk
\end{remark}

\begin{theorem}[$H^1$-error, pressure-robustness]
	\label{thm:stabpkpk_aux}
	Let $(u,p)\in V\times Q$ solve \eqref{eq:weak_stokes} with $u\in H^{s}(\Omega)$, with $s \ge 3$, and
	$p\in H^2(\Omega)$, and let
	$(u_h,p_h)\in V_h\times Q_h$ solve \Cref{prob:stabpkpk}. Then
	\begin{align}
		\label{eq:stabpkpk_aux}
		\|u - u_h\|_{H^1}+\frac{1}{\nu}\|\Lag{k} p - p_h\|_{L^2}
\lesssim h^l\,(|u|_{H^{l+1}}+|\Delta u|_{H^l}),
	\end{align}
	with $l=\min\{k,s-2\}$.
\end{theorem}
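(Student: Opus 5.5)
We start from the quasi-best-approximation estimate \eqref{eq:stabpkpk_bestapprox} of \Cref{thm:stabpkpk_bestapprox}. Since $s\ge 3$, its hypotheses hold, and it remains to bound its two right-hand terms by $h^l(|u|_{H^{l+1}}+|\Delta u|_{H^l})$ with $l=\min\{k,s-2\}$.

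\textbf{Approximation term.} We choose $v_h\coloneqq\Lag{k}u\in V_h$. This is well defined because $u\in H^3\subset C^0(\bar\Omega)$, and it preserves the homogeneous boundary values. The Lagrange estimate \eqref{eq:lagrange_error} is applied with $j=l+1$, which is admissible since $2\le l+1\le k+1$ provided $l\ge1$; this holds because $k\ge1$ and $s-2\ge1$. We use it twice:
\begin{itemize}
\item with $m=1$, giving $\|u-\Lag{k}u\|_{H^1}\lesssim h^{l}|u|_{H^{l+1}}$;
\item elementwise with $m=2$, giving $h\,|u-\Lag{k}u|_{H^2(\mathcal{T}_h)}\lesssim h\cdot h^{l-1}|u|_{H^{l+1}}$.
\end{itemize}
The broken (local) form of the estimate is needed for the second bullet, because $\Lag{k}u$ is only piecewise $H^2$. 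Both contributions are therefore of order $h^l|u|_{H^{l+1}}$. In the case $k=1$ (\Cref{rem:p1p1}) the $H^2$ term reduces to $h|u|_{H^2}$, which is already of the required order.

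\textbf{Consistency term.} We use the crude route of \Cref{rem:h1_consistency} rather than the superconvergent one. By the remark following \eqref{def:broken_dual_norm}, the broken dual norm is bounded by the $L^2$ norm, so
\[
\|\Delta u-\NedI{k-1}(\Delta u)\|_{V_h^*(\mathcal{T}_h)}\le \|\Delta u-\NedI{k-1}(\Delta u)\|_{L^2}.
\]
We then apply \eqref{eq:nedelec_general_error} with Nédélec index $k-1$, $m=0$ and $j=l$. The requirement $1\le j\le (k-1)+1=k$ is met since $1\le l\le k$. The regularity $\Delta u\in H^{l}$ follows from $u\in H^{s}$ with $l\le s-2$. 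This gives the bound $h^{l}|\Delta u|_{H^l}$.

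Summing the approximation and consistency bounds yields \eqref{eq:stabpkpk_aux}.

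\textbf{Main obstacle.} The delicate point is the index bookkeeping, so that each interpolation estimate is invoked within its admissible range. One must check that $j=l+1\ge2$ in the Lagrange estimate and that $1\le j=l\le k$ in the Nédélec estimate. One must also check that the Nédélec interpolant of $\Delta u$ is well defined, since its edge moments need some regularity. This holds because $\Delta u\in H^{s-2}\subset H^1$ and, in two dimensions, $H^1$ functions have $L^2$ traces on edges. A second point is that the $H^2$-seminorm term requires the elementwise version of \eqref{eq:lagrange_error}, as noted above.
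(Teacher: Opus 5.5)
Your proof is correct and follows the paper's argument. You bound the infimum in \Cref{thm:stabpkpk_bestapprox} through a Lagrange interpolant, using \eqref{eq:lagrange_error} with $j=l+1$ and $m=1,2$, and you bound the consistency term through $\|\cdot\|_{V_h^*(\mathcal{T}_h)}\le\|\cdot\|_{L^2}$ and \eqref{eq:nedelec_general_error} with $m=0$, $j=l$. The only difference is cosmetic: the paper takes $v_h=\Lag{l}u$ while you take $v_h=\Lag{k}u$, which is equally valid because \eqref{eq:lagrange_error} allows any $2\le j\le k+1$ and $\Lag{l}(\mathcal{T}_h)\subset\Lag{k}(\mathcal{T}_h)$.
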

\begin{proof}
	Bounding the infimum in \Cref{thm:stabpkpk_bestapprox} by the choice $v_h = \Lag{l} u$ gives by the Lagrange interpolation error estimate \eqref{eq:lagrange_error} (with $m=1,j=l+1$),
	\begin{alignat*}{2}
		\|u-\Lag{l}u\|_{H^1}&\lesssim h^l|u|_{H^{l+1}}   && \quad \text{using \eqref{eq:lagrange_error} with $m=1,j=l+1$},\\
		|u-\Lag{l}u|_{H^2}&\lesssim h^{l-1}|u|_{H^{l+1}} && \quad \text{using \eqref{eq:lagrange_error} with $m=2,j=l+1$},
	\end{alignat*}
	so that $h\,|u-\Lag{l}u|_{H^2(\mathcal{T}_h)}	\lesssim h^l|u|_{H^{l+1}}$ as well. As discussed in \Cref{rem:h1_consistency}, using \eqref{eq:nedelec_general_error} (with $m=0, j=l$) and $\|\cdot\|_{V_h^*(\mathcal{T}_h)}\le\|\cdot\|_{L^2}$ we also have
	\begin{align*}
		\|\Delta u - \NedI{k-1}(\Delta u)\|_{V_h^*(\mathcal{T}_h)}\lesssim
	h^l|\Delta u|_{H^l}.
	\end{align*}
	 Collecting all four terms in \eqref{eq:stabpkpk_bestapprox} gives
	\eqref{eq:stabpkpk_aux}.
\end{proof}

It is worth noting that the estimate of \Cref{thm:stabpkpk_bestapprox} only assumed $p\in H^2(\Omega)$ while a higher regularity of the velocity is required in order to get the desired high-order convergence bound. The pressure regularity is solely needed for the Lagrange interpolation $\Lag{k} p$ to be well defined, and in accordance with \Cref{rem:mini_pressureregularity}. However, since the pressure-robust velocity error does not depend on the pressure, no additional regularity of the pressure is needed (in contrast to the following pressure error estimate).

\begin{theorem}[Pressure error]
	\label{thm:stabpkpk_pressure}
		Let $(u,p)\in H^{s}(\Omega)\times H^{s-1}(\Omega)$, with $s \ge 3$, solve \eqref{eq:weak_stokes}
	and $(u_h,p_h)$ solve \Cref{prob:stabpkpk}. Then
	\begin{align*}
		\|p-p_h\|_{L^2} \lesssim h^l\,(\nu|u|_{H^{l+1}}+\nu|\Delta u|_{H^l}+h |p|_{H^{l+1}}),
	\end{align*}
	with $l=\min\{k,s-2\}$.
\end{theorem}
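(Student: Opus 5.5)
The argument mirrors the proof of \Cref{thm:mini_pressure}: we split the pressure error with the Lagrange interpolant of the pressure as an intermediate quantity. By the triangle inequality,
\begin{align*}
	\|p-p_h\|_{L^2} \leq \|p-\Lag{k} p\|_{L^2} + \|\Lag{k} p - p_h\|_{L^2}.
\end{align*}
We bound the two terms separately, each with results already available.

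For the discrete part $\|\Lag{k} p - p_h\|_{L^2}$ we invoke \Cref{thm:stabpkpk_aux}. Its assumptions hold: $u\in H^s(\Omega)$ with $s\geq 3$, and $p\in H^{s-1}(\Omega)\subset H^2(\Omega)$. The theorem bounds $\frac{1}{\nu}\|\Lag{k} p-p_h\|_{L^2}$ by $h^l(|u|_{H^{l+1}}+|\Delta u|_{H^l})$. Multiplying by $\nu$ yields the contributions $h^l(\nu|u|_{H^{l+1}}+\nu|\Delta u|_{H^l})$. This step is pressure-independent, because the pressure terms cancel through \Cref{thm:commuting} in the proof of \Cref{thm:stabpkpk_bestapprox}.

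For the interpolation part we use the Lagrange estimate \eqref{eq:lagrange_error} with $m=0$ and $j=l+1$. This is admissible since $l+1\leq k+1$ and $l+1\geq 2$, because $l=\min\{k,s-2\}\geq 1$. It gives $\|p-\Lag{k} p\|_{L^2}\lesssim h^{l+1}|p|_{H^{l+1}} = h^l\cdot h|p|_{H^{l+1}}$. Adding the two bounds gives the claimed estimate.

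The only delicate point is the regularity bookkeeping: the seminorm $|p|_{H^{l+1}}$ must be finite. Since $l+1\leq s-1$, the assumption $p\in H^{s-1}(\Omega)$ suffices. The same inequality $l+1\leq k+1$ justifies applying the degree-$k$ interpolation estimate with index $j=l+1$. No further obstacle is expected; as in \Cref{rem:mini_superconvergence}, the extra factor $h$ on the pressure term explains the higher experimental pressure rate for small $\nu$.
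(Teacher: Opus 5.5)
Your proposal is correct and matches the paper's proof. The paper also splits $\|p-p_h\|_{L^2}$ by the triangle inequality through $\Lag{k} p$, bounds the interpolation part by \eqref{eq:lagrange_error}, and bounds the discrete part by $\nu$ times the estimate of \Cref{thm:stabpkpk_aux}; your checks $l\ge 1$, $l+1\le k+1$ and $l+1\le s-1$ are exactly what make both steps admissible.
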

\begin{proof}
	The proof follows with the same steps as for \Cref{thm:mini_pressure}, using the Lagrange interpolation error estimates \eqref{eq:lagrange_error} and \Cref{thm:stabpkpk_aux}.
\end{proof}

\begin{remark}
	In \Cref{thm:stabpkpk_pressure} one could replace the term $h |p|_{H^{l+1}}$ by $|p|_{H^{l}}$ since the remaining terms have a prefactor $h^l$ anyway. However - similarly as for the MINI elements - the present estimate shows the pre-asymptotic higher convergence error of the pressure when the viscosity is small, see \Cref{rem:mini_superconvergence}.
	\eremk
\end{remark}

We conclude this section with the $L^2$-norm error estimate, which is where
the super-convergent, pressure-independent consistency error of \Cref{prop:consistency_high}
enters and is indispensable for retaining optimal order.

\begin{theorem}[$L^2$-error, pressure-robust, optimal order]
	\label{thm:stabpkpk_l2}
		Let $(u,p)\in V\times Q$ solve \eqref{eq:weak_stokes} with $u\in H^{s}(\Omega)$, with $s \ge 4$, and
	$p\in H^2(\Omega)$, and assume that the dual solution $(w,r)$ of \eqref{eq:dual_stokes} is
	$H^2$/$H^1$-regular. Then
	\begin{align*}
		\|u-u_h\|_{L^2} \lesssim h^{l+1}\,(|u|_{H^{l+1}}+\|\Delta u\|_{H^{l+1}}),
	\end{align*}
	with $l=\min\{k,s-3\}$.
\end{theorem}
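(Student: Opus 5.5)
We follow the Aubin--Nitsche argument of \Cref{thm:mini_l2}, now with the stabilization terms included. Set $e\coloneqq u-u_h$ and test the dual problem \eqref{eq:dual_stokes} with $v=e$. This gives $\|e\|_{L^2}^2=\nu\,a(e,w)+b(e,r)$. We then subtract the discrete error equation tested with $(w_h,q_h)\coloneqq(\Cle{1}w,\Cle{1}r)$. This test pair lies in $V_h\times Q_h$ for every $k\ge1$, after a mean correction of $\Cle{1}r$, which is harmless because $b(v,1)=0$ for $v\in V$.

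The error equation is the computation already performed in the proof of \Cref{thm:stabpkpk_bestapprox} with $v_h=u$. The pressure contributions cancel through $\NedI{k-1}(\nabla p)=\nabla\Lag{k}p$ (\Cref{thm:commuting}), both in the Galerkin part and inside the stabilization sum. Using $\NedI{k-1}f$ in $g$ makes the stabilization consistent up to interpolation. Writing $\phi_h\coloneqq-\nu\Delta\Cle{1}w+\nabla\Cle{1}r$, we expect
\begin{align*}
\|e\|_{L^2}^2&=\nu\,a(e,w-\Cle{1}w)+b(e,r-\Cle{1}r)+(\div(\Cle{1}w),\Lag{k}p-p_h)\\
&\quad-\nu(\Delta u-\NedI{k-1}\Delta u,\Cle{1}w)
-\alpha h^2\sum_T\big(\Delta u_h-\NedI{k-1}\Delta u,\phi_h\big)_{L^2(T)}
\end{align*}
up to signs, where for $k=1$ the term $\Delta u_h$ vanishes.

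The terms are then bounded one at a time.
\begin{itemize}
\item The first two terms are handled by Cauchy--Schwarz, \eqref{eq:quasi_interpolation} and \Cref{thm:stabpkpk_aux}. Here $\|e\|_{H^1}\lesssim h^l(\cdots)$, which gives $h^{l+1}(|u|_{H^{l+1}}+|\Delta u|_{H^l})(\nu|w|_{H^2}+|r|_{H^1})$.
\item For the third term we insert $(\div w,\cdot)=0$. Then $\|\div(w-\Cle{1}w)\|_{L^2}\lesssim h|w|_{H^2}$, and \Cref{thm:stabpkpk_aux} controls $\nu^{-1}\|\Lag{k}p-p_h\|_{L^2}$. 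This gives the same order, and the factor $\nu$ is absorbed by $\nu|w|_{H^2}$.
\item The fourth term is the key one. \Cref{prop:consistency_high} with index $k-1$, $f=\Delta u$ and $g=\Cle{1}w$ gives $h^{k+1}\|\Delta u\|_{H^{k+1}}\nu\|w\|_{H^1}$ via \eqref{eq:quasi_interpolation_stability}. When $s$ is the limiting factor, we use the analogous estimate at order $l$; this requires the proposition for the lower Sobolev index, which the same argument delivers with $\NedII{l}$ in place of $\NedII{k}$.
\end{itemize}

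The main obstacle is the last, stabilization term. Now that $\Cle{1}w$ is piecewise linear, $\Delta\Cle{1}w=0$ elementwise, so $\phi_h=\nabla\Cle{1}r$ and $\|\phi_h\|_{L^2}\lesssim\|r\|_{H^1}$ with no inverse inequality. This is exactly why $\Cle{1}$ rather than $\Lag{k}$ is the right choice of test function. We split
\[
\Delta u_h-\NedI{k-1}\Delta u=(\Delta u_h-\Delta u)+(\Delta u-\NedI{k-1}\Delta u).
\]
\begin{itemize}
\item For the second piece, $h^2\|\Delta u-\NedI{k-1}\Delta u\|_{L^2}\|r\|_{H^1}\lesssim h^{l+2}|\Delta u|_{H^l}\|r\|_{H^1}$ by \eqref{eq:nedelec_general_error}.
\item For the first piece we need $h^2|u-u_h|_{H^2(\mathcal{T}_h)}\lesssim h^{l+1}(\cdots)$. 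We obtain it from the triangle inequality with $\Lag{k}u$ together with the inverse inequality $h|\Lag{k}u-u_h|_{H^2(T)}\lesssim\|\Lag{k}u-u_h\|_{H^1(T)}$, followed by \Cref{thm:stabpkpk_aux} and \eqref{eq:lagrange_error}. This yields $h\cdot h^{l}(|u|_{H^{l+1}}+|\Delta u|_{H^l})\|r\|_{H^1}$.
\end{itemize}

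Collecting all terms, dividing by $\|e\|_{L^2}$ and invoking the regularity $\nu\|w\|_{H^2}+\|r\|_{H^1}\lesssim\|e\|_{L^2}$ gives the claim with $|\Delta u|_{H^l}\le\|\Delta u\|_{H^{l+1}}$.
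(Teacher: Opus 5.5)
Your route is the paper's: Aubin--Nitsche with the test pair $(\Cle{1}w,\Cle{1}r)$, $\phi_h=\nabla\Cle{1}r$ because $\Delta\Cle{1}w=0$, \Cref{prop:consistency_high} for the fourth term, and the splitting of the stabilization residual via $\Delta u$ and $\Lag{k}u$. However, your error identity drops a term, so the proof is incomplete as written.

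The commuting diagram removes only the \emph{exact} pressure. The form $g$ contributes $\NedI{k-1}(\nabla p)=\nabla\Lag{k}p$, but $c$ evaluates the residual at the \emph{discrete} pressure $p_h$. The stabilization residual is therefore $-\nu(\Delta u_h-\NedI{k-1}\Delta u)+\nabla(p_h-\Lag{k}p)$, not only its velocity part. Hence $\|e\|_{L^2}^2$ also contains, up to sign,
\begin{align*}
\frac{\alpha h^2}{\nu}\sum_T\big(\nabla(p_h-\Lag{k}p),\nabla\Cle{1}r\big)_{L^2(T)}.
\end{align*}
This is the stabilization counterpart of the term $(\div(\Cle{1}w),\Lag{k}p-p_h)$, which you did keep. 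The computation of \Cref{thm:stabpkpk_bestapprox} with $v_h=u$ also produces it once you unfold $c((e,\Lag{k}p-p_h),\cdot)$ inside $B_h$. Your proof never bounds this term.

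The repair uses only tools you already have. Since $p_h-\Lag{k}p$ is piecewise polynomial, an inverse inequality, \eqref{eq:quasi_interpolation_stability} and \Cref{thm:stabpkpk_aux} give
\begin{align*}
\frac{\alpha h^2}{\nu}\sum_T\big(\nabla(p_h-\Lag{k}p),\nabla\Cle{1}r\big)_{L^2(T)}
\lesssim \frac{h}{\nu}\|p_h-\Lag{k}p\|_{L^2}\|r\|_{H^1}
\lesssim h^{l+1}\big(|u|_{H^{l+1}}+|\Delta u|_{H^{l}}\big)\|r\|_{H^1}.
\end{align*}
The prefactor $1/\nu$ is matched exactly by the $\nu^{-1}$-weighted, $p$-independent pressure bound of \Cref{thm:stabpkpk_aux}. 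So the pressure-robustness of the energy estimate is used a second time here.

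A minor point concerns the case $l=s-3<k$. You do not need a lower-index version of \Cref{prop:consistency_high}. The route you suggest via $\NedII{l}$ also does not carry over directly: for $l<k$ the difference $\NedII{l}f-\NedI{k-1}f$ generally has nonzero curl, so it is not of gradient type. Instead, $j=l+1\le k$ is admissible in \eqref{eq:nedelec_general_error} for $\NedI{k-1}$. Plain Cauchy--Schwarz then already gives
\begin{align*}
\nu(\Delta u-\NedI{k-1}\Delta u,\Cle{1}w)\lesssim h^{l+1}|\Delta u|_{H^{l+1}}\,\nu\|w\|_{L^2}.
\end{align*}
Superconvergence is needed only when $l=k$.
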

\begin{proof}
	We test \eqref{eq:dual_stokes} with $v=e\coloneqq u-u_h$ as in \Cref{thm:mini_l2} and
	subtract the (vanishing) discrete relation obtained from \Cref{prob:stabpkpk} tested with
	$v_h=\Cle{1} w$, $q_h=\Cle{1} r$. Using
	$f=-\nu\Delta u+\nabla p$, and \Cref{thm:commuting} exactly as
	in the proof of \Cref{thm:mini_l2} (now also exploiting \Cref{thm:commuting} inside the
	stabilization sum), one obtains
	\begin{align}
		\|e\|_{L^2}^2 &=
		\nu\,a(u-u_h,w-\Cle{1} w) + b(u-u_h,r-\Cle{1} r)
		+ (\div(\Cle{1} w), p_h - \Lag{k} p) \nonumber\\
		&\quad -\nu(\Delta u - \NedI{k-1}(\Delta u), \Cle{1} w) \nonumber\\
		&\quad +\alpha h^2\sum_T (\Delta u_h - \Delta u, \nabla \Cle{1} r)_{L^2(T)} \label{eq:stab_termA}\\
		&\quad +\alpha h^2\sum_T (\Delta u - \NedI{k-1}(\Delta u), \nabla \Cle{1} r)_{L^2(T)} \label{eq:stab_termB}\\
		&\quad -\frac{\alpha h^2}{\nu}\sum_T (\nabla(p_h-\Lag{k} p), \nabla \Cle{1} r)_{L^2(T)}, \label{eq:stab_termC}
	\end{align}
	where we used that $\Delta \Cle{1} w = 0$.
	For the first four terms we proceed exactly as in the proof of \Cref{thm:mini_l2} but exploiting the error estimate \Cref{thm:stabpkpk_aux} and
	\Cref{prop:consistency_high} (for the last term)
	to bound \begin{align*}
		&\nu\,a(u-u_h,w-\Cle{1} w) + b(u-u_h,r-\Cle{1} r) \\
		&+ (\div(\Cle{1} w), p_h - \Lag{k} p)
		 -\nu(\Delta u - \NedI{k-1}(\Delta u), \Cle{1} w)\\
		&\lesssim h^{l+1}(|u|_{H^{l+1}}+\|\Delta u\|_{H^{l+1}})
		(\nu|w|_{H^2}+|r|_{H^1}).
	\end{align*}
	Note, that the estimate of the last term via the super-convergence result from \Cref{prop:consistency_high} is the one, where the higher regularity of $\Delta u$  was needed.

	Similarly as in the proof of \Cref{thm:stabpkpk_bestapprox}, the stabilization terms need to be bound carefully. We start with the first sum \eqref{eq:stab_termA}, adding and subtracting $\Delta \Lag{k} u$ and using Cauchy--Schwarz gives
	\begin{align*}
		\alpha h^2\sum_T (\Delta u_h - \Delta u, \nabla \Cle{1} r)_{L^2(T)}
		\lesssim
		&\alpha h^2\sum_T \|\Delta (u_h - \Lag{k} u)\|_{L^2(T)}\|\nabla \Cle{1} r\|_{L^2(T)}\\
		+ &\alpha h^2\sum_T \|\Delta (\Lag{k} u - u)\|_{L^2(T)}\|\nabla \Cle{1} r\|_{L^2(T)}.
	\end{align*}
	Since $u_h - \Lag{k} u$ is a polynomial, we use an inverse inequality and the $H^1$-stability of $\Cle{1}$, see \eqref{eq:quasi_interpolation_stability}, to get
	\begin{align*}
		\alpha h^2\sum_T \|\Delta (u_h - \Lag{k} u)\|_{L^2(T)}\|\nabla \Cle{1} r\|_{L^2(T)}
		&\lesssim  h \sum_T \|\nabla (u_h - \Lag{k} u)\|_{L^2(T)}\|\nabla \Cle{1} r\|_{L^2(T)}\\
		&\lesssim  h \|u_h - \Lag{k} u\|_{H^1}\|r\|_{H^1}.
	\end{align*}
	Adding and subtracting $u$ and using the triangle inequality, we can bound this term by \Cref{thm:stabpkpk_aux}  and the Lagrange interpolation error estimate \eqref{eq:lagrange_error} (with $m=1, j = l+1$) to get
	\begin{align*}
		 h \|u_h - \Lag{k} u\|_{H^1}\|r\|_{H^1}
		 &\le
h (\|u_h - u\|_{H^1}+\|u - \Lag{k} u\|_{H^1}) \|r\|_{H^1} \\
		&\lesssim  h^{l+1}(|u|_{H^{l+1}}+\|\Delta u\|_{H^{l}})\|r\|_{H^1}.
	\end{align*}
	For the second term, we use the Lagrange interpolation error estimate \eqref{eq:lagrange_error} (with $m=2, j = l+1$) to similarly get
	\begin{align*}
		\alpha h^2\sum_T \|\Delta (\Lag{k} u - u)\|_{L^2(T)}\|\nabla \Cle{1} r\|_{L^2(T)}
		&\lesssim h^2 | \Lag{k} u - u|_{H^2(\mathcal{T}_h)}\|r\|_{H^1}\\
		&\lesssim  h^{l+1}(|u|_{H^{l+1}})\|r\|_{H^1}.
	\end{align*}

	For the second stabilization-sum term \eqref{eq:stab_termB}, we proceed similarly as for the later term, using Cauchy--Schwarz and the approximation properties of the Nédélec interpolation operator \eqref{eq:nedelec_general_error} (with $m=0$, $j = l$), to get
	\begin{align*}
		 \alpha h^2\sum_T (\Delta u - \NedI{k-1}(\Delta u), \nabla \Cle{1} r)_{L^2(T)}
		&\lesssim h^2 \sum_T \|\Delta u - \NedI{k-1}(\Delta u)\|_{L^2(T)}\|\nabla \Cle{1} r\|_{L^2(T)}\\
		&\lesssim h^2 \|\Delta u - \NedI{k-1}(\Delta u)\|_{L^2}\|r\|_{H^1}\\
		&\lesssim h^{l+2}\|\Delta u\|_{H^{l+1}}\|r\|_{H^1}
		\lesssim h^{l+1}\|\Delta u\|_{H^{l+1}}\|r\|_{H^1},
	\end{align*}
	where the last step drops one power of $h$; the additional power of $h$ was inherited from the choice $j=l$. Although one could use $j = l-1$ here, this would be an issue in the borderline case $l=1$, since the (standard) Nédélec interpolation operator is not $L^2$-stable, cf.\ \eqref{eq:nedelec_general_error}. As we already needed the higher regularity of $\Delta u$ at the beginning of the proof (due to \Cref{prop:consistency_high}) we opted to use $j=l$ here as well.

	For the last stabilization-sum term \eqref{eq:stab_termC}, we proceed similarly. Since $p_h - \Lag{k} p$ is a polynomial, we can use an inverse inequality and the $H^1$-stability of $\Cle{1}$ to get
	\begin{align*}
		\frac{\alpha h^2}{\nu}\sum_T (\nabla(p_h-\Lag{k} p), \nabla \Cle{1} r)_{L^2(T)}
		&\lesssim \frac{\alpha h}{\nu}\sum_T \|p_h - \Lag{k} p\|_{L^2(T)}\|\nabla \Cle{1} r\|_{L^2(T)}\\
		&\lesssim \frac{\alpha h}{\nu}\|p_h - \Lag{k} p\|_{L^2}\|r\|_{H^1},\\
		&\lesssim h^{l+1}(|u|_{H^{l+1}}+\|\Delta u\|_{H^{l}})\|r\|_{H^1},
	\end{align*}
	where we used \Cref{thm:stabpkpk_aux} in the last step.

	Collecting all terms, dividing by
	$\|e\|_{L^2}$ and using $H^2$/$H^1$-regularity of $(w,r)$ concludes the proof.
\end{proof}

\section{The Taylor--Hood element: a cautionary example}
\label{sec:taylorhood}
The commonly known family of Taylor--Hood elements \cite{TAYLOR197373}, $V_h\coloneqq [\Lag{k+1}(\mathcal{T}_h)]^2\cap V$, $Q_h\coloneqq \Lag{k}(\mathcal{T}_h)\cap Q$, is
inf-sup stable without any stabilization \cite{GiraultRaviart86}. Since velocity and
pressure are now approximated by polynomials of \emph{different} degree, the commuting
diagram forces the choice $\NedI{k-1}$ -- the operator
that commutes with the \emph{pressure} space $\Lag{k}$, one degree
below the velocity space $\Lag{k+1}$. Repeating the arguments of
\Cref{sec:mini,sec:stabpkpk} yields, for the lowest order case
$k=1$ (the classical $P^2P^1$ element) and $(u,p) \in H^4(\Omega) \times H^2(\Omega)$ the pressure robust and optimal error estimates
\begin{align*}
	\|u-u_h\|_{H^1} &\lesssim h^2 (|u|_{H^3}+\|\Delta u\|_{H^2}), \\
	\|p-p_h\|_{L^2} &\lesssim h^2 (\nu |u|_{H^3}+\nu \|\Delta u\|_{H^2}+|p|_{H^2}).
\end{align*}
where the interpolation operator $\NedI{0}$ was used in the right hand side - similar to the MINI element. To derive these estimates, one follows the same steps as in the proof of \Cref{thm:mini_h1} but uses the super-convergent consistency error estimate of \Cref{prop:consistency_low} in the last step \eqref{eq:hone_mini_laststep} to bound $\| (\id - \NedI{0}) \Delta u\|_{V_h^*}$.

In the $L^2$ norm, however, we only get the bound
\begin{align*}
	\|u-u_h\|_{L^2} \lesssim h^2\,(h|u|_{H^3}+\|\Delta u\|_{H^2}),
\end{align*}
which is only \emph{quadratic}, one order below the cubic rate of the standard,
non-pressure-robust $P^2P^1$ method: the term $\nu(\Delta u -
\NedI{0}(\Delta u), \Cle{1} w)$ - following the same steps as in the proof of \Cref{thm:mini_l2} - is now only
of order $h^2$  by \Cref{prop:consistency_low}. Since
$\NedI{k-1}$ is tied to the pressure space by the
commuting diagram, no choice of $k$ can improve on this loss of one order for the
Taylor--Hood family. So far we have not been able to find a remedy for this, and we leave it as an open problem.

\section{Towards relaxed regularity assumptions}
\label{sec:discussion_regularity}
Throughout the analysis we imposed a comparatively high regularity on the exact solution
$(u,p)$ and, correspondingly, on the force $f$ -- e.g.\ $u\in H^{k+2}(\Omega)$,
$p\in H^2(\Omega)$ already for the $H^1$-error alone, and more for the $L^2$-error via
\Cref{prop:consistency_high}. We want to comment on why this is needed and how it could be
avoided.

This requirement has two, closely related, origins. First, the force is interpolated by the
\emph{canonical} (degree-of-freedom based) Nédélec operator $\NedI{k-1}$, whose edge and face
moments are only classically well-defined for sufficiently regular input -- already the
lowest-order case $\NedI{0} v \coloneqq \sum_E\big(\int_E v\cdot\tau_E\,\ds\big)\varphi_E$
requires a well-defined tangential trace on every edge, i.e.\ $v\in H^1(\Omega)$. Since
$f=-\nu\Delta u+\nabla p$, this in particular means $\Delta u$ and $\nabla p$ must be regular
enough for $\NedI{k-1}$ to make sense. Second, and via exactly the commuting diagram that drives the whole method
(\Cref{thm:commuting}), the same obstruction reappears one level up for the pressure: the
canonical Lagrange operator $\Lag{k}$ has nodal (point-evaluation) degrees of freedom already
at the vertices, for every $k\ge1$, and these are only classically well-defined once
$p\in H^2(\Omega)$. The higher regularity of $u$ itself then enters independently, simply because the best-approximation and optimal-order estimates measure the error against $\Lag{k}u$ and $\NedI{k-1}(\Delta u)$ in progressively higher-order Sobolev norms.

A natural remedy is to replace the canonical interpolation operators by \emph{commuting
quasi-interpolation} operators in the spirit of Clément \cite{Clement1975} and Scott--Zhang
\cite{ScottZhang1990}: i.e. operators defined via local $L^2$-averaging over vertex or edge
patches, which are consequently well-defined -- and stable -- already for merely
$L^2(\Omega)$ or locally $H^1$ data. The difficulty is that a quasi-interpolation of $f$ has to be
paired with a \emph{commuting} quasi-interpolation of the pressure.

Such commuting quasi-interpolation operators, spanning the discrete de Rham complex rather
than a single space, have indeed been constructed and studied in the literature
\cite{Schoberl2008, MR2373181, MR3246803}, and very recently \cite{ErnGuzmanPotuVohralik2025}. So far, however, these
constructions have been used almost exclusively as \emph{analysis} tools -- for a posteriori
error estimation, multigrid theory, and the abstract theory of finite element exterior
calculus -- rather than being incorporated into an actual implementation of a method like the
one proposed here. For concreteness and ease of implementation, we have therefore chosen to
rely on the standard, canonical interpolation operators throughout; replacing them by a
commuting quasi-interpolation pair, thereby relaxing the regularity assumptions on $f$ down to $L^2(\Omega)$ (respectively locally $H^1$) data, is a natural and, we believe,
promising direction for future work.

\section{Numerical examples}
\label{sec:numerics}
All experiments were implemented in Python using the finite element software
Netgen/NGSolve \cite{netgen,ngsolve}.\footnote{The code and raw convergence data underlying the numerical experiments in this section are available at \url{https://doi.org/10.25592/uhhfdm.21546}.}

We choose the unit square $\Omega\coloneqq(0,1)^2$ and let the exact (smooth) velocity and pressure be given by
$u(x,y)\coloneqq\curl\big(x^2(1-x)^2y^2(1-y)^2\big)\in [H^1_0(\Omega)]^2$ and $p(x,y)\coloneqq x^5+y^5-1/3\in L^2_0(\Omega) $.
In the following convergence results we consider a structured mesh of $230$ triangles which is subsequently uniformly refined.
We discuss the convergence of the velocity error $\|u-u_h\|_{L^2}$ and $\|\nabla(u-u_h)\|_{L^2}$, as well as the pressure error $\|p-p_h\|_{L^2}$ together with
the experimental order of convergence (eoc) for two viscosities, $\nu=1$ and $\nu=10^{-6}$. In the case of stabilized methods we choose the stabilization parameter $\alpha=10^{-3}$. The discrete solutions are either the solutions of the pressure-robust methods \Cref{prob:mini} and \Cref{prob:stabpkpk}, or their corresponding standard counterparts where the right hand side does not include the Nédélec interpolation of the force $f$. To highlight the difference we will use the symbols $u_h^\pr$ and $p_h^\pr$ for the pressure-robust methods, and $u^{\std}_h$ and $p^{\std}_h$ for the standard methods.

\subsection{The MINI element and the stabilized $P^2P^2$ element}
\label{ssec:numex_mini_and_P2P2}

\begin{table}[h]
	\centering
	\renewcommand{\arraystretch}{1.1}
	\begin{tabular}{c|c|c|c|c|c|c}
		\midrule\midrule
		\multicolumn{7}{c}{ MINI element -- standard method -- $\nu = 1$} \\
		$|\mathcal{T}_h|$ & $\|u-u^\std_h\|_{L^2}$ & eoc & $\|\nabla(u-u_h^\std)\|_{L^2}$ & eoc & $\|p-p^\std_h\|_{L^2}$ & eoc \\
		\hline
		\nummesh{230}   & \num{3.40e-04} & --   & \num{1.18e-02} & --   & \num{6.68e-03} & --   \\
		\nummesh{920}   & \num{1.38e-04} & \numeoc{1.31} & \num{7.33e-03} & \numeoc{0.69} & \num{5.68e-03} & \numeoc{0.23} \\
		\nummesh{3680}  & \num{3.23e-05} & \numeoc{2.09} & \num{3.51e-03} & \numeoc{1.06} & \num{2.12e-03} & \numeoc{1.42} \\
		\nummesh{14720} & \num{7.73e-06} & \numeoc{2.06} & \num{1.71e-03} & \numeoc{1.04} & \num{7.60e-04} & \numeoc{1.48}\\
		\midrule
		\multicolumn{7}{c}{ MINI element -- standard method -- $\nu = 10^{-6}$} \\
		$|\mathcal{T}_h|$ & $\|u-u^\std_h\|_{L^2}$ & eoc & $\|\nabla(u-u_h^\std)\|_{L^2}$ & eoc & $\|p-p^\std_h\|_{L^2}$ & eoc \\
		\hline
		\nummesh{230}   & \num{1.78e+01} & --   & \num{1.32e+03} & --   & \num{3.05e-03} & --   \\
		\nummesh{920}   & \num{3.75e+00} & \numeoc{2.25} & \num{4.68e+02} & \numeoc{1.50} & \num{1.15e-03} & \numeoc{1.41} \\
		\nummesh{3680}  & \num{4.20e-01} & \numeoc{3.16} & \num{1.11e+02} & \numeoc{2.07} & \num{2.84e-04} & \numeoc{2.01} \\
		\nummesh{14720} & \num{4.66e-02} & \numeoc{3.17} & \num{2.62e+01} & \numeoc{2.08} & \num{7.03e-05} & \numeoc{2.02} \\
		\midrule\midrule
	\end{tabular}
	\caption{
	Error convergence for the solutions $u_h^\std$ and $p_h^\std$ of the standard method using the MINI element and viscosities $\nu = 1$ and $\nu = 10^{-6}$.}
	\label{tab:mini_std}
\end{table}
\begin{table}[h]
	\centering
	\renewcommand{\arraystretch}{1.1}
	\begin{tabular}{c|c|c|c|c|c|c}
		\midrule\midrule
		\multicolumn{7}{c}{ MINI element -- pressure-robust method -- $\nu = 1$} \\
		$|\mathcal{T}_h|$ & $\|u-u^\pr_h\|_{L^2}$ & eoc & $\|\nabla(u-u^\pr_h)\|_{L^2}$ & eoc & $\|p-p^\pr_h\|_{L^2}$ & eoc \\
		\hline
		\nummesh{230}   & \num{4.17e-04} & --   & \num{1.18e-02} & --   & \num{8.61e-03} & --   \\
		\nummesh{920}   & \num{1.60e-04} & \numeoc{1.38} & \num{7.32e-03} & \numeoc{0.68} & \num{5.86e-03} & \numeoc{0.55} \\
		\nummesh{3680}  & \num{3.81e-05} & \numeoc{2.07} & \num{3.51e-03} & \numeoc{1.06} & \num{2.15e-03} & \numeoc{1.44} \\
		\nummesh{14720} & \num{9.17e-06} & \numeoc{2.05} & \num{1.71e-03} & \numeoc{1.04} & \num{7.66e-04} & \numeoc{1.49} \\
		\midrule
		\multicolumn{7}{c}{ MINI element -- pressure-robust method -- $\nu = 10^{-6}$} \\
		$|\mathcal{T}_h|$ & $\|u-u^\pr_h\|_{L^2}$ & eoc & $\|\nabla(u-u^\pr_h)\|_{L^2}$ & eoc & $\|p-p^\pr_h\|_{L^2}$ & eoc \\
		\hline
		\nummesh{230}   & \num{4.17e-04} & --   & \num{1.18e-02} & --   & \num{6.30e-03} & --   \\
		\nummesh{920}   & \num{1.60e-04} & \numeoc{1.38} & \num{7.32e-03} & \numeoc{0.68} & \num{1.93e-03} & \numeoc{1.71} \\
		\nummesh{3680}  & \num{3.81e-05} & \numeoc{2.07} & \num{3.51e-03} & \numeoc{1.06} & \num{4.84e-04} & \numeoc{1.99} \\
		\nummesh{14720} & \num{9.17e-06} & \numeoc{2.05} & \num{1.71e-03} & \numeoc{1.04} & \num{1.21e-04} & \numeoc{2.00} \\
		\midrule\midrule
	\end{tabular}
	\caption{Error convergence for the solutions $u^\pr_h$ and $p_h^\pr$ of the pressure-robust method (\Cref{prob:mini}) using the MINI element and viscosities $\nu = 1$ and $\nu = 10^{-6}$.}
	\label{tab:mini_robust}
\end{table}

\Cref{tab:mini_std,tab:mini_robust} report the convergence of the two (standard and pressure-robust) methods on the same sequence of meshes for the MINI finite element method.

We first start with the pressure-robust method (\Cref{tab:mini_robust}) and note that the errors are independent of the viscosity $\nu$ and converge at optimal order. The $H^1$- and $L^2$-velocity error converges at linear and quadratic order, respectively, while the pressure error converges at a slightly more than linear order for $\nu = 1$ and even at quadratic order for $\nu = 10^{-6}$. The latter can be explained by the estimates from \Cref{thm:mini_pressure} as discussed in \Cref{rem:mini_superconvergence}: the linear, velocity dependent term is scaled by the viscosity and thus -- depending on the order of magnitude of the $|u|_{H^2} + |\Delta u|_{H^1}$ - can be dominated by the $h^2|p|_{H^2}$ term. This is a pre-asymptotic super-convergence effect, which is particularly more pronounced for small viscosities.

Similar observations can be made for the standard method (\Cref{tab:mini_std}) at $\nu = 1$: all errors converge at optimal order, i.e. we see a linear and quadratic order of the $H^1$- and $L^2$-velocity error, respectively, and an increased order of the pressure error. The latter can be explained by a similar pre-asymptotic super-convergence behavior as in the pressure-robust case. Most crucially, the velocity errors of the standard methods are \textit{not} independent of the viscosity: for $\nu = 10^{-6}$, the velocity errors are polluted by the pressure error scaled by the inverse of the viscosity and thus get much bigger -- in fact, this pollution even raises the observed eoc above the true rates $1$ and $2$, since by \eqref{eq:classical_error} the $\nu^{-1}h^2$-scaled term dominates the true $h$-term over this range of $h$.

The picture for the stabilized $P^2P^2$ element (\Cref{tab:p2p2_std,tab:p2p2_robust})
parallels the MINI case: the $H^1$- and $L^2$-velocity errors of the pressure-robust method
in \Cref{tab:p2p2_robust} are again independent of the choice of $\nu$, while the standard method of \Cref{tab:p2p2_std} is off by roughly three to four orders of magnitude at every mesh level despite eoc values that provide an optimal (or even higher pre-asymptotic) order of convergence.

\begin{table}[h]
	\centering
	\renewcommand{\arraystretch}{1.1}
	\begin{tabular}{c|c|c|c|c|c|c}
		\midrule\midrule
		\multicolumn{7}{c}{ Stabilized $P^2P^2$ element -- standard method -- $\nu = 1$} \\
		$|\mathcal{T}_h|$ & $\|u-u^\std_h\|_{L^2}$ & eoc & $\|\nabla(u-u_h^\std)\|_{L^2}$ & eoc & $\|p-p^\std_h\|_{L^2}$ & eoc \\
		\hline
		\nummesh{230}   & \num{2.26e-05} & --   & \num{1.43e-03} & --   & \num{5.01e-03} & --   \\
		\nummesh{920}   & \num{4.26e-06} & \numeoc{2.41} & \num{4.66e-04} & \numeoc{1.62} & \num{6.28e-04} & \numeoc{2.99} \\
		\nummesh{3680}  & \num{5.51e-07} & \numeoc{2.95} & \num{1.17e-04} & \numeoc{1.99} & \num{1.47e-04} & \numeoc{2.10} \\
		\nummesh{14720} & \num{7.02e-08} & \numeoc{2.97} & \num{2.93e-05} & \numeoc{2.00} & \num{3.53e-05} & \numeoc{2.06} \\
		\midrule
		\multicolumn{7}{c}{ Stabilized $P^2P^2$ element -- standard method -- $\nu = 10^{-6}$} \\
		$|\mathcal{T}_h|$ & $\|u-u^\std_h\|_{L^2}$ & eoc & $\|\nabla(u-u_h^\std)\|_{L^2}$ & eoc & $\|p-p^\std_h\|_{L^2}$ & eoc \\
		\hline
		\nummesh{230}   & \num{5.80e-01} & --   & \num{4.86e+01} & --   & \num{1.99e-04} & --   \\
		\nummesh{920}   & \num{8.74e-02} & \numeoc{2.73} & \num{1.35e+01} & \numeoc{1.85} & \num{3.83e-05} & \numeoc{2.37} \\
		\nummesh{3680}  & \num{6.90e-03} & \numeoc{3.66} & \num{2.09e+00} & \numeoc{2.69} & \num{4.73e-06} & \numeoc{3.02} \\
		\nummesh{14720} & \num{5.00e-04} & \numeoc{3.78} & \num{2.99e-01} & \numeoc{2.81} & \num{5.63e-07} & \numeoc{3.07} \\
		\midrule\midrule
	\end{tabular}
	\caption{Error convergence for the solutions $u_h^\std$ and $p_h^\std$ of the standard
	method using the stabilized $P^2P^2$ element ($\alpha=10^{-3}$) and viscosities $\nu = 1$
	and $\nu = 10^{-6}$.}
	\label{tab:p2p2_std}
\end{table}
\begin{table}[h]
	\centering
	\renewcommand{\arraystretch}{1.1}
	\begin{tabular}{c|c|c|c|c|c|c}
		\midrule\midrule
		\multicolumn{7}{c}{ Stabilized $P^2P^2$ element -- pressure-robust method -- $\nu = 1$} \\
		$|\mathcal{T}_h|$ & $\|u-u^\pr_h\|_{L^2}$ & eoc & $\|\nabla(u-u^\pr_h)\|_{L^2}$ & eoc & $\|p-p^\pr_h\|_{L^2}$ & eoc \\
		\hline
		\nummesh{230}   & \num{2.28e-05} & --   & \num{1.43e-03} & --   & \num{5.01e-03} & --   \\
		\nummesh{920}   & \num{4.27e-06} & \numeoc{2.41} & \num{4.66e-04} & \numeoc{1.62} & \num{6.29e-04} & \numeoc{2.99} \\
		\nummesh{3680}  & \num{5.51e-07} & \numeoc{2.96} & \num{1.17e-04} & \numeoc{1.99} & \num{1.47e-04} & \numeoc{2.10} \\
		\nummesh{14720} & \num{7.02e-08} & \numeoc{2.97} & \num{2.93e-05} & \numeoc{2.00} & \num{3.53e-05} & \numeoc{2.06} \\
		\midrule
		\multicolumn{7}{c}{ Stabilized $P^2P^2$ element -- pressure-robust method -- $\nu = 10^{-6}$} \\
		$|\mathcal{T}_h|$ & $\|u-u^\pr_h\|_{L^2}$ & eoc & $\|\nabla(u-u^\pr_h)\|_{L^2}$ & eoc & $\|p-p^\pr_h\|_{L^2}$ & eoc \\
		\hline
		\nummesh{230}   & \num{2.28e-05} & --   & \num{1.43e-03} & --   & \num{1.50e-04} & --   \\
		\nummesh{920}   & \num{4.27e-06} & \numeoc{2.41} & \num{4.66e-04} & \numeoc{1.62} & \num{2.94e-05} & \numeoc{2.35} \\
		\nummesh{3680}  & \num{5.51e-07} & \numeoc{2.96} & \num{1.17e-04} & \numeoc{1.99} & \num{3.69e-06} & \numeoc{3.00} \\
		\nummesh{14720} & \num{7.02e-08} & \numeoc{2.97} & \num{2.93e-05} & \numeoc{2.00} & \num{4.62e-07} & \numeoc{3.00} \\
		\midrule\midrule
	\end{tabular}
	\caption{Error convergence for the solutions $u_h^\pr$ and $p_h^\pr$ of the
	pressure-robust method (\Cref{prob:stabpkpk}) using the stabilized $P^2P^2$ element
	($\alpha=10^{-3}$) and viscosities $\nu = 1$ and $\nu = 10^{-6}$.}
	\label{tab:p2p2_robust}
\end{table}

\subsection{The Taylor--Hood element}
\label{ssec:numex_th}
\begin{table}[h]
	\centering
	\renewcommand{\arraystretch}{1.1}
	\begin{tabular}{c|c|c|c|c|c|c}
		\midrule\midrule
		\multicolumn{7}{c}{ Taylor--Hood element ($P^2P^1$) -- standard method -- $\nu = 1$} \\
		$|\mathcal{T}_h|$ & $\|u-u^\std_h\|_{L^2}$ & eoc & $\|\nabla(u-u_h^\std)\|_{L^2}$ & eoc & $\|p-p^\std_h\|_{L^2}$ & eoc \\
		\hline
		\nummesh{230}   & \num{1.90e-05} & --   & \num{1.36e-03} & --   & \num{3.03e-03} & --   \\
		\nummesh{920}   & \num{6.71e-06} & \numeoc{1.50} & \num{7.13e-04} & \numeoc{0.93} & \num{1.08e-03} & \numeoc{1.49} \\
		\nummesh{3680}  & \num{8.28e-07} & \numeoc{3.02} & \num{1.79e-04} & \numeoc{1.99} & \num{2.72e-04} & \numeoc{1.99} \\
		\nummesh{14720} & \num{9.79e-08} & \numeoc{3.08} & \num{4.39e-05} & \numeoc{2.03} & \num{6.77e-05} & \numeoc{2.00} \\
		\midrule
		\multicolumn{7}{c}{ Taylor--Hood element ($P^2P^1$) -- standard method -- $\nu = 10^{-6}$} \\
		$|\mathcal{T}_h|$ & $\|u-u^\std_h\|_{L^2}$ & eoc & $\|\nabla(u-u_h^\std)\|_{L^2}$ & eoc & $\|p-p^\std_h\|_{L^2}$ & eoc \\
		\hline
		\nummesh{230}   & \num{1.13e+01} & --   & \num{7.32e+02} & --   & \num{3.02e-03} & --   \\
		\nummesh{920}   & \num{5.86e+00} & \numeoc{0.95} & \num{5.87e+02} & \numeoc{0.32} & \num{1.08e-03} & \numeoc{1.49} \\
		\nummesh{3680}  & \num{7.15e-01} & \numeoc{3.03} & \num{1.47e+02} & \numeoc{2.00} & \num{2.70e-04} & \numeoc{2.00} \\
		\nummesh{14720} & \num{8.27e-02} & \numeoc{3.11} & \num{3.56e+01} & \numeoc{2.04} & \num{6.73e-05} & \numeoc{2.01} \\
		\midrule\midrule
	\end{tabular}
	\caption{Error convergence for the solutions $u_h^\std$ and $p_h^\std$ of the standard
	method \eqref{eq:disc_stokes} using the Taylor--Hood element ($P^2P^1$) and viscosities
	$\nu = 1$ and $\nu = 10^{-6}$.}
	\label{tab:th_std}
\end{table}
\begin{table}[h]
	\centering
	\renewcommand{\arraystretch}{1.1}
	\begin{tabular}{c|c|c|c|c|c|c}
		\midrule\midrule
		\multicolumn{7}{c}{ Taylor--Hood element ($P^2P^1$) -- pressure-robust method -- $\nu = 1$} \\
		$|\mathcal{T}_h|$ & $\|u-u^\pr_h\|_{L^2}$ & eoc & $\|\nabla(u-u^\pr_h)\|_{L^2}$ & eoc & $\|p-p^\pr_h\|_{L^2}$ & eoc \\
		\hline
		\nummesh{230}   & \num{9.88e-05} & --   & \num{1.37e-03} & --   & \num{6.21e-03} & --   \\
		\nummesh{920}   & \num{2.76e-05} & \numeoc{1.84} & \num{5.13e-04} & \numeoc{1.41} & \num{1.91e-03} & \numeoc{1.70} \\
		\nummesh{3680}  & \num{6.82e-06} & \numeoc{2.02} & \num{1.29e-04} & \numeoc{2.00} & \num{4.79e-04} & \numeoc{1.99} \\
		\nummesh{14720} & \num{1.70e-06} & \numeoc{2.00} & \num{3.21e-05} & \numeoc{2.00} & \num{1.20e-04} & \numeoc{2.00} \\
		\midrule
		\multicolumn{7}{c}{ Taylor--Hood element ($P^2P^1$) -- pressure-robust method -- $\nu = 10^{-6}$} \\
		$|\mathcal{T}_h|$ & $\|u-u^\pr_h\|_{L^2}$ & eoc & $\|\nabla(u-u^\pr_h)\|_{L^2}$ & eoc & $\|p-p^\pr_h\|_{L^2}$ & eoc \\
		\hline
		\nummesh{230}   & \num{9.88e-05} & --   & \num{1.37e-03} & --   & \num{6.30e-03} & --   \\
		\nummesh{920}   & \num{2.76e-05} & \numeoc{1.84} & \num{5.13e-04} & \numeoc{1.41} & \num{1.93e-03} & \numeoc{1.71} \\
		\nummesh{3680}  & \num{6.82e-06} & \numeoc{2.02} & \num{1.29e-04} & \numeoc{2.00} & \num{4.84e-04} & \numeoc{1.99} \\
		\nummesh{14720} & \num{1.70e-06} & \numeoc{2.00} & \num{3.21e-05} & \numeoc{2.00} & \num{1.21e-04} & \numeoc{2.00} \\
		\midrule\midrule
	\end{tabular}
	\caption{Error convergence for the solutions $u_h^\pr$ and $p_h^\pr$ of the
	pressure-robust method (the analogue of \Cref{prob:mini} using $\NedI{0} f$) using the
	Taylor--Hood element ($P^2P^1$) and viscosities $\nu = 1$ and $\nu = 10^{-6}$.}
	\label{tab:th_robust}
\end{table}

As discussed in \Cref{sec:taylorhood}, the Taylor--Hood element is the one where we expect
a reduced order in the $L^2$-norm error of the velocity for the pressure-robust method. The numerical results in \Cref{tab:th_std,tab:th_robust} confirm this expectation for $k = 1$, i.e. a quadratic polynomial order for the velocity space and a linear order for the pressure space. As for the other two elements, the $H^1$-velocity and pressure errors converge at optimal (quadratic) order for both the standard and the pressure-robust method, and the velocity errors of the pressure-robust method are independent of $\nu$. Unfortunately, the $L^2$-velocity error of the pressure-robust method converges only quadratically, while the standard method converges cubically. However, we want to emphasize that, even though the $L^2$-velocity error of the pressure-robust method converges at a lower order, it is still much smaller than the $L^2$-velocity error of the standard method for small viscosities.

\subsection{The three dimensional case}
\label{sec:numerics_3d}

As discussed in the introduction, the analysis can be extended also to the three dimensional case. For the lowest order case, e.g. the $P^1P^1$ element, the interpolation operator is again the standard Nédélec interpolation operator $\NedI{0}$ and the analysis is exactly the same. For the higher order case, the moments used to define the interpolation operator $\NedI{k-1}$ slightly differs, thus the analysis of \Cref{prop:consistency_high} is slightly different and even more technical. However, the main idea is the same. To demonstrate the applicability of the proposed method in three dimensions, we present a numerical example in the following.
For this we choose the unit cube $\Omega\coloneqq(0,1)^3$ and let the exact (smooth) velocity and pressure be given by
$u(x,y,z)\coloneqq\curl\big(x^2(1-x)^2y^2(1-y)^2z^2(1-z)^2\big)\in [H^1_0(\Omega)]^3$ and $p(x,y,z)\coloneqq x^5+y^5+z^5-1/2\in L^2_0(\Omega) $.
We consider an initial structured mesh of $196$ tetrahedra which is subsequently uniformly refined.

\begin{table}[h]
	\centering
	\renewcommand{\arraystretch}{1.1}
	\begin{tabular}{c|c|c|c|c|c|c}
		\midrule\midrule
		\multicolumn{7}{c}{ Stabilized $P^1P^1$ element -- standard method -- $\nu = 1$} \\
		$|\mathcal{T}_h|$ & $\|u-u^\std_h\|_{L^2}$ & eoc & $\|\nabla(u-u_h^\std)\|_{L^2}$ & eoc & $\|p-p^\std_h\|_{L^2}$ & eoc \\
		\hline
		\nummesh{196}    & \num{4.61e-04} & --   & \num{3.96e-03} & --   & \num{4.92e-02} & --   \\
		\nummesh{1568}   & \num{2.17e-04} & \numeoc{1.09} & \num{3.11e-03} & \numeoc{0.35} & \num{1.92e-02} & \numeoc{1.36} \\
		\nummesh{12544}  & \num{7.17e-05} & \numeoc{1.60} & \num{1.74e-03} & \numeoc{0.84} & \num{6.64e-03} & \numeoc{1.53} \\
		\nummesh{100352} & \num{1.81e-05} & \numeoc{1.99} & \num{7.78e-04} & \numeoc{1.16} & \num{2.07e-03} & \numeoc{1.68} \\
		\midrule
		\multicolumn{7}{c}{ Stabilized $P^1P^1$ element -- standard method -- $\nu = 10^{-6}$} \\
		$|\mathcal{T}_h|$ & $\|u-u^\std_h\|_{L^2}$ & eoc & $\|\nabla(u-u_h^\std)\|_{L^2}$ & eoc & $\|p-p^\std_h\|_{L^2}$ & eoc \\
		\hline
		\nummesh{196}    & \num{7.87e+01} & --   & \num{9.68e+02} & --   & \num{4.67e-02} & --   \\
		\nummesh{1568}   & \num{6.43e+01} & \numeoc{0.29} & \num{1.77e+03} & \numeoc{-0.87} & \num{1.79e-02} & \numeoc{1.38} \\
		\nummesh{12544}  & \num{1.75e+01} & \numeoc{1.88} & \num{9.39e+02} & \numeoc{0.92} & \num{5.77e-03} & \numeoc{1.64} \\
		\nummesh{100352} & \num{2.18e+00} & \numeoc{3.01} & \num{2.38e+02} & \numeoc{1.98} & \num{1.49e-03} & \numeoc{1.95} \\
		\midrule\midrule
	\end{tabular}
	\caption{Error convergence for the solutions $u_h^\std$ and $p_h^\std$ of the standard
	method using the stabilized $P^1P^1$ element and viscosities $\nu = 1$ and $\nu = 10^{-6}$ in three dimensions.}
	\label{tab:p1p1_3d_std}
\end{table}
\begin{table}[h]
	\centering
	\renewcommand{\arraystretch}{1.1}
	\begin{tabular}{c|c|c|c|c|c|c}
		\midrule\midrule
		\multicolumn{7}{c}{ Stabilized $P^1P^1$ element -- pressure-robust method -- $\nu = 1$} \\
		$|\mathcal{T}_h|$ & $\|u-u^\pr_h\|_{L^2}$ & eoc & $\|\nabla(u-u^\pr_h)\|_{L^2}$ & eoc & $\|p-p^\pr_h\|_{L^2}$ & eoc \\
		\hline
		\nummesh{196}    & \num{4.89e-04} & --   & \num{4.04e-03} & --   & \num{7.26e-02} & --   \\
		\nummesh{1568}   & \num{2.36e-04} & \numeoc{1.05} & \num{2.64e-03} & \numeoc{0.62} & \num{2.57e-02} & \numeoc{1.50} \\
		\nummesh{12544}  & \num{7.82e-05} & \numeoc{1.59} & \num{1.47e-03} & \numeoc{0.84} & \num{8.30e-03} & \numeoc{1.63} \\
		\nummesh{100352} & \num{2.06e-05} & \numeoc{1.93} & \num{7.41e-04} & \numeoc{0.99} & \num{2.46e-03} & \numeoc{1.75} \\
		\midrule
		\multicolumn{7}{c}{ Stabilized $P^1P^1$ element -- pressure-robust method -- $\nu = 10^{-6}$} \\
		$|\mathcal{T}_h|$ & $\|u-u^\pr_h\|_{L^2}$ & eoc & $\|\nabla(u-u^\pr_h)\|_{L^2}$ & eoc & $\|p-p^\pr_h\|_{L^2}$ & eoc \\
		\hline
		\nummesh{196}    & \num{4.89e-04} & --   & \num{4.04e-03} & --   & \num{7.32e-02} & --   \\
		\nummesh{1568}   & \num{2.36e-04} & \numeoc{1.05} & \num{2.64e-03} & \numeoc{0.62} & \num{2.53e-02} & \numeoc{1.54} \\
		\nummesh{12544}  & \num{7.82e-05} & \numeoc{1.59} & \num{1.47e-03} & \numeoc{0.84} & \num{7.77e-03} & \numeoc{1.70} \\
		\nummesh{100352} & \num{2.06e-05} & \numeoc{1.93} & \num{7.41e-04} & \numeoc{0.99} & \num{2.04e-03} & \numeoc{1.93} \\
		\midrule\midrule
	\end{tabular}
	\caption{Error convergence for the solutions $u_h^\pr$ and $p_h^\pr$ of the
	pressure-robust method (\Cref{prob:stabpkpk}) using the stabilized $P^1P^1$ element and viscosities $\nu = 1$ and $\nu = 10^{-6}$ in three dimensions.}
	\label{tab:p1p1_3d_robust}
\end{table}

\Cref{tab:p1p1_3d_std,tab:p1p1_3d_robust} report the convergence of the standard and pressure-robust methods, respectively, on the same sequence of tetrahedral meshes for the (lowest order) stabilized $P^1P^1$ element (again with $\alpha = 10^{-3}$). As in the two dimensional case, the pressure-robust method (\Cref{tab:p1p1_3d_robust}) yields velocity errors that are independent of the viscosity $\nu$, while the standard method (\Cref{tab:p1p1_3d_std}) suffers from severe pressure-induced pollution of the velocity error for $\nu = 10^{-6}$; on the coarser meshes this pollution is so strong that the $H^1$-velocity error even briefly increases under refinement before the asymptotic rate sets in. Apart from this pre-asymptotic effect, the observed rates for both methods agree with the two dimensional case, confirming that the pressure-robustness mechanism and the underlying analysis carry over to three dimensions.

\bibliographystyle{amsplain}
\bibliography{references}

\end{document}